\newtheorem{thm}{Theorem}[section]
\newtheorem{lem}[thm]{Lemma}
\newtheorem{exple}[thm]{Example}
\newtheorem{cor}[thm]{Corollary}
\newtheorem{prop}[thm]{Proposition}
\newtheorem{defn}[thm]{Definition}
\title{A Quixotic Proof of Fermat's Two Squares Theorem for Prime Numbers}
\author{Roland Bacher}
\begin{document}
\maketitle

\begin{abstract} Every odd prime number $p$ can be written in
  exactly $(p+1)/2$ ways as a sum $ab+cd$ of two ordered products $ab$
  and $cd$ such that
  $\min(a,b)>\max(c,d)$. An easy corollary is a proof of Fermat's Theorem
  expressing primes
  in $1+4\mathbb N$ as sums of two squares 
  \footnote{Keywords: Primes, sum of two squares,
 lattice.
 Math. class: Primary:
 11A41.
 Secondary: 11H06.
}.
\end{abstract}
%\par dans recherche/Fermat/oldhat1/tex 

%calculs sur ifnode2/matindec.mw
%calculs recents sur ifnode2/oldhat1.mw

%{\bf Add perhaps:}
%\begin{rem} Integral solutions $(a,b,c)$ in $\mathbb N^3$
%  of $n=a b+c$ with $0\leq c<\min(a,b)$
%  are easy to enumerate: There are $2\lfloor \sqrt{n}\rfloor-1$ of them
%  if $n<\lfloor\sqrt{n}\rfloor^2+\lfloor\sqrt{n}\rfloor$ and there
%  are $2\lfloor \sqrt{n}\rfloor$ of them otherwise. Indeed,
%  Euclidean division yields a solution $n=pq+t$ with $0\leq r<p\leq q$
%  for every integer $p$ in $\{1,\ldots,\lfloor\sqrt{n}\rfloor\}$
%  and $p<q$ except if $p^2\leq n<p^2+p$.
%  \end{rem}

\section{Introduction}

\begin{thm}\label{thmmain} For every odd prime number $p$ there exist
  exactly $(p+1)/2$ sequences $(a,b,c,d)$ of four elements
  in the set $\mathbb N=\{0,1,2,\ldots\}$ of non-negative integers
  such that $p=a b+c d$ and $\min(a,b)>\max(c,d)$.
\end{thm}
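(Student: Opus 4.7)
The plan is to associate to each admissible quadruple $(a,b,c,d)$ the sublattice
$L(a,b,c,d)=\mathbb{Z}(a,d)+\mathbb{Z}(-c,b)\subset\mathbb{Z}^2$.
Since $\det\bigl((a,d),(-c,b)\bigr)=ab+cd=p$, the lattice $L(a,b,c,d)$ has index $p$ in $\mathbb{Z}^2$; and since $p$ is prime, there are exactly $p+1$ such sublattices, canonically parametrised by the projective line $\mathbb{P}^1(\mathbb{F}_p)$. The theorem then reduces to two claims: the assignment $(a,b,c,d)\mapsto L(a,b,c,d)$ is injective, and its image has cardinality $(p+1)/2$.

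For injectivity I would argue geometrically. The hypothesis $\min(a,b)>\max(c,d)$ confines the basis vector $(a,d)$ to the narrow cone $R_1=\{(x,y):x\ge 1,\ 0\le y<x\}$ and the basis vector $(-c,b)$ to the symmetric cone $R_2=\{(x,y):y\ge 1,\ -y<x\le 0\}$, while additionally forcing the cross inequalities $a>c$ and $b>d$. If two distinct admissible quadruples produced the same sublattice, the two bases would be related by a non-identity element of $SL_2(\mathbb{Z})$; a short case analysis on such matrices should show that any non-trivial unimodular change of basis moves at least one of the basis vectors out of its assigned cone, yielding a contradiction.

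To control the size of the image I would introduce the involution $\sigma$ on sublattices induced by the $90^\circ$ rotation of $\mathbb{Z}^2$: it acts on $\mathbb{P}^1(\mathbb{F}_p)$ by $\lambda\mapsto -1/\lambda$, has $2$ fixed points when $p\equiv 1\pmod 4$ and none when $p\equiv 3\pmod 4$, and corresponds at the level of quadruples to the natural swap $(a,b,c,d)\leftrightarrow(b,a,d,c)$. Admissibility is therefore $\sigma$-invariant, which reduces the count to an orbit count under $\sigma$; one must then show that admissible sublattices make up exactly half of $\mathbb{P}^1(\mathbb{F}_p)$. A natural route is to pair each non-admissible sublattice with an admissible one via a complementary involution, or to test admissibility directly from a Hermite-normal-form presentation of each $L_\lambda$.

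The most delicate point, and the expected main obstacle, is precisely this last step: producing an intrinsic criterion that splits the $p+1$ sublattices of index $p$ into two equinumerous classes. One concrete approach is to apply a tilted Euclidean reduction to the Hermite normal form of each $L_\lambda$ and to read off admissibility from a combinatorial signature of the resulting continued-fraction expansion of $p/\lambda$, the $(p+1)/2$ count emerging from a parity invariant of this signature. The ``quixotic'' flavour announced in the title most likely surfaces at this accounting step, in the form of an indirect or unusually inventive combinatorial argument rather than a straightforward bijection.
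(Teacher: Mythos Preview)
Your lattice encoding and cone picture coincide with the paper's setup (up to the relabelling $c\leftrightarrow d$): solutions correspond to bases of index-$p$ sublattices of $\mathbb{Z}^2$ with one generator in each of two fixed adjacent octants. Two genuine gaps remain.

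First, injectivity is not the short case analysis you anticipate. A lattice can carry \emph{several} bases with one vector in each prescribed octant: the paper's running example $\Lambda_7(13)$ has two, namely $(6,1),(-1,2)$ and $(5,3),(-1,2)$. Only the cross-inequalities single out one of them, and proving that exactly one basis survives takes real work: the paper first shows that all such bases must share the unique shortest vector of the lattice, and then that among the remaining candidates for the second generator exactly one satisfies both cross-inequalities.

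Second, and more seriously, your $90^\circ$ rotation $\sigma:\lambda\mapsto -1/\lambda$ is the wrong involution for the count. As you observe, it \emph{preserves} admissibility, so it cannot explain why the admissible lattices make up exactly half of $\mathbb{P}^1(\mathbb{F}_p)$. The complementary involution you are looking for is a \emph{reflection}, for instance $(x,y)\mapsto(-x,y)$, acting on slopes by $\mu\mapsto -\mu$: it swaps the two octant colours and therefore pairs each admissible $\Lambda_\mu$ (for $\mu\in\{2,\dots,p-2\}$) with the non-admissible $\Lambda_{-\mu}$, fixed-point-freely on that set. The four exceptional slopes $0,\infty,\pm 1$ are handled separately: $0$ and $\infty$ produce the two degenerate solutions with $cd=0$, while $\Lambda_{\pm 1}$ carry no admissible basis at all (each is invariant under a colour-swapping reflection). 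This yields $2+(p-3)/2=(p+1)/2$ directly, with no continued-fraction detour. Your rotation $\sigma$ is precisely the involution that drives the \emph{corollary} on sums of two squares, not the theorem itself.
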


As a consequence we obtain a new proof of an old result first observed by
Albert Girard (1595-1632) (who wrote moreover that the set of integers which are sums
of two squares contains $2$ and is closed under product)
around 1625. A refined statement including multiplicities was written in
1640 by Pierre Fermat (1607-1665) in a letter addressed to Marin Mersenne
(1588-1648).
The old rascal did not want to spoil his margins
and left the proof (contained in a series of letters and publications
dated around 1750) to Leonhard Euler (1707-1783)
who had no such qualms. More historical
details can for example
be found in the entry ``Fermat's theorem on sums of two squares''
of \cite{WikipFerm}. 

\begin{cor}\label{oldhat} Every prime number in $1+4\mathbb N$ is a sum
  of two squares.
\end{cor}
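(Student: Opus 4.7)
The plan is to apply Theorem~\ref{thmmain} via an involution/parity argument on the set $S_p$ of valid quadruples $(a,b,c,d)$. The key numerical hint is that $|S_p|=(p+1)/2$ is \emph{odd} precisely when $p\equiv 1\pmod 4$, which strongly suggests introducing an involution $\sigma:S_p\to S_p$ whose fixed points correspond to representations of $p$ as a sum of two squares.

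First, I would define $\sigma(a,b,c,d)=(b,a,d,c)$. This is well-defined on $S_p$ because both the equation $ab+cd=p$ and the inequality $\min(a,b)>\max(c,d)$ are symmetric under the two swaps $a\leftrightarrow b$ and $c\leftrightarrow d$; it is clearly an involution. The next step is to identify its fixed points: a quadruple is fixed iff $a=b$ and $c=d$, in which case $p=a^2+c^2$.

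Then I invoke the standard parity argument: the non-fixed points of $\sigma$ pair up, so $|S_p|\equiv |\mathrm{Fix}(\sigma)|\pmod 2$. When $p\equiv 1\pmod 4$, Theorem~\ref{thmmain} gives $|S_p|=(p+1)/2$ odd, so $\sigma$ has at least one fixed point $(a,a,c,c)$. One must rule out the degenerate case $c=d=0$, which would force $p=a^2$, impossible since $p$ is prime; hence $a>c\geq 1$ and $p=a^2+c^2$ is a genuine representation as a sum of two squares.

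There is essentially no obstacle: all of the substance is concentrated in Theorem~\ref{thmmain}, and the corollary reduces to choosing the right involution and reading off parities. The only small checks are that $\sigma$ preserves $S_p$ (immediate from the symmetry of the defining conditions) and that fixed points cannot be degenerate (immediate from primality of $p$).
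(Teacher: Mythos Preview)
Your proposal is correct and follows exactly the paper's own argument: apply the involution $(a,b,c,d)\mapsto(b,a,d,c)$ to the set $\mathcal S_p$ of Theorem~\ref{thmmain} and use the oddness of $(p+1)/2$ when $p\equiv 1\pmod 4$ to force a fixed point $(a,a,c,c)$ with $p=a^2+c^2$. Your explicit exclusion of the degenerate case $c=0$ is a harmless extra check (it cannot occur since $p$ is prime), but otherwise the two proofs coincide.
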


\begin{proof}[Proof of Corollary \ref{oldhat}] If $p$ is a prime number congruent to
  $1\pmod 4$, the number $(p+1)/2$ of solutions $(a,b,c,d)$ defined by
  Theorem \ref{thmmain} is odd. The involution $(a,b,c,d)\longmapsto
  (b,a,d,c)$ has therefore a fixed point $(a,a,c,c)$ expressing $p$ as a sum of two squares.
  \end{proof}

  Nowadays a venerable old hat, Corollary \ref{oldhat} has of course already
  quite a few proofs. Some are described in \cite{WikipFerm}.   
  The author enjoyed the presentation of a few \lq\lq elementary\rq\rq proofs
  given in \cite{Els}.
%  Several of these proofs are based on the existence of a fixed point:

  Zagier (based on unpublished notes of Heath-Brown, \cite{HB}) published a one sentence proof based on fixed points in \cite{Za}. A. Spivak gave an
  elementary geometric
  interpretation of Zagier's proof, see \cite{Sp}. A nice variation on
  Zagier's proof was given by Dolan in \cite{Do}. An interesting
  discussion on Zagier's proof and variations can be found at
  \cite{Mo} which describes moreover A. Spivak's proof in an answer.

Grace gave a very elegant constructive proof, see \cite{Gra} (essentially
equivalent to the fourth proof of Theorem 366 in \cite{HW}) which
we recall in Section \ref{sectGrace} for the convenience of the reader.

  Christopher, see \cite{Chr}, gave a proof based on the existence of a fixed point of an involution
  acting on suitable partitions with parts of exactly two different sizes
  (amounting essentially to solutions of $p=ab+cd$ without
  requirements of inequalities).

%  Diophantine equations of the form $a^2+bc=n$ with $n$ in $\mathbb N$ fixed and $(a,b,c)$ in $\mathbb N^3$ such that $b<c<\sqrt{n}$ have been studied by Ska\l{}ba, see \cite{Skal1} and \cite{Skal2}.  

  The set $\mathcal S_p$ of solutions defined by Theorem \ref{thmmain}
  is invariant under the action of Klein's Vierergruppe $\mathbb V$ (isomorphic to the
  $2$-dimensional vector space over the field of two elements or, equivalently,
  isomorphic to the unique non-cyclic group of four elements) with
  non-trivial elements acting by
  $$(a,b,c,d)\longmapsto (b,a,c,d),(a,b,d,c),(b,a,d,c)$$
  (i.e. by exchanging either the first two elements, or the last two elements,
  or the first two and the last two elements).
  The following tables list all $\mathbb V$-orbits represented by
  elements $(a,b,c,d)$ with
  $a,b,c,d$ decreasing together with the orbit-sizes $\sharp(\mathcal O)$
  occurring in the sets
  $\mathcal S_{29},\mathcal S_{31},\mathcal S_{37}$:
  $$\begin{array}{ccc}
      \begin{array}{cccc|c}
        a&b&c&d&\sharp(\mathcal O)\\
        \hline
29&1&0&0&2\\
14&2&1&1&2\\
7&4&1&1&2\\
9&3&2&1&4\\
5&5&4&1&2\\
5&5&2&2&1\\
5&4&3&3&2\\
\hline
&&&&15
      \end{array}\quad&
       \begin{array}{cccc|c}
        a&b&c&d&\sharp(\mathcal O)\\
        \hline
31&1&0&0&2\\
15&2&1&1&2\\
10&3&1&1&2\\
6&5&1&1&2\\
7&4&3&1&4\\
9&3&2&2&2\\
5&5&3&2&2\\
\hline
&&&&16
       \end{array}\quad&
       \begin{array}{cccc|c}
        a&b&c&d&\sharp(\mathcal O)\\
        \hline
37&1&0&0&2\\
18&2&1&1&2\\
12&3&1&1&2\\
9&4&1&1&2\\
6&6&1&1&1\\         
7&5&2&1&4\\
11&3&2&2&2\\
7&4&3&3&2\\
5&5&4&3&2\\
\hline
&&&&19
       \end{array}     
    \end{array}
    $$
      \begin{figure}[h]
  \epsfysize=3.8cm
\center{\epsfbox{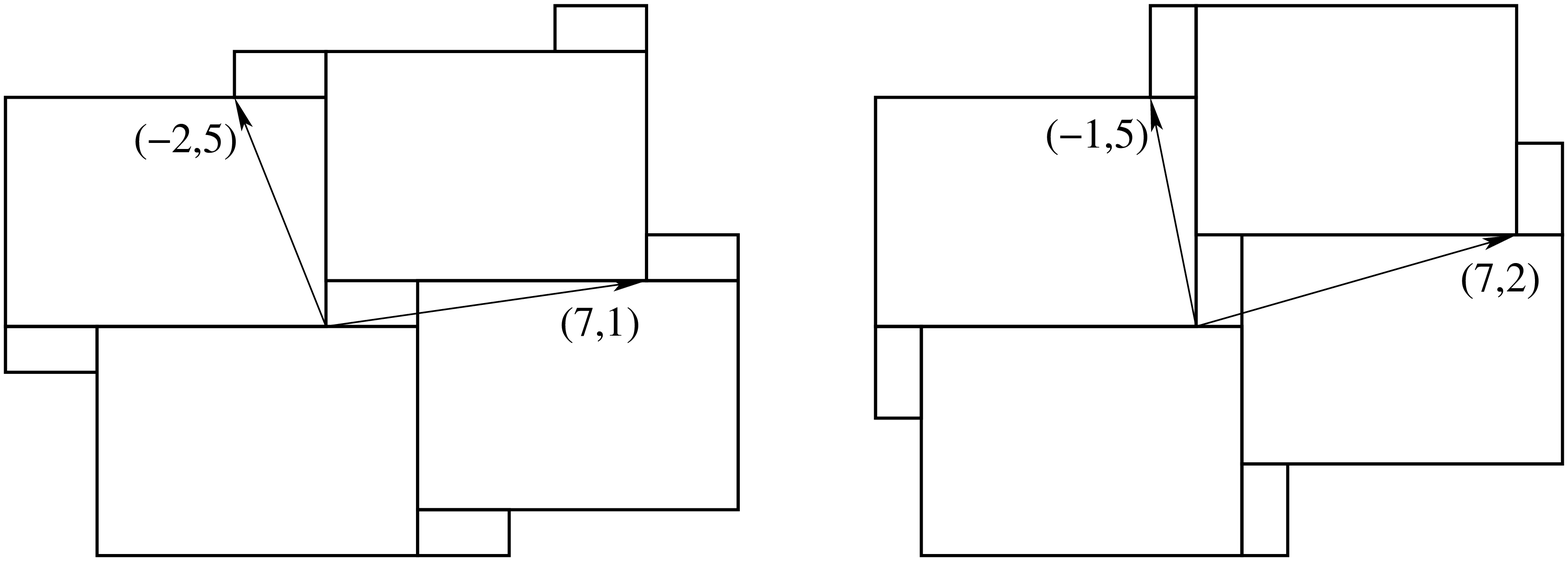}}
\caption{Tilings associated to $7\cdot 5+1\cdot 2$ and $7\cdot 5+2\cdot 1$.}\label{figtil}
\end{figure}
    Establishing complete lists $\mathcal S_p$ of solutions
    for small primes is a rather pleasant pastime
    and rates among the author's more confessable procrastinations.

    A solution $p=ab+cd$ in $\mathcal S_p$ can be visualized as a lattice-tiling
    with a fundamental domain given by the union of two rectangles of size $a\times b$ and $d\times c$, aligned as in Figure
    \ref{figtil}. The resulting tiling is invariant by translations in
    $\mathbb Z(a,c)+\mathbb Z(-d,b)$. Klein's Vierergruppe
    $\mathbb V$ acts on the set of
    all such tilings by quarter-turns on rectangles.
    Tilings associated to $\mathbb V$-invariant solutions
    correspond to the case
    where both rectangles are squares.

    A rough sketch for proving Theorem \ref{thmmain} goes along the following lines: Every solution
    $p=ab+cd$ in $\mathcal S_p$ can be encoded by two
    vectors $u=(a,c),v=(-d,b)$ generating a sublattice $\mathbb Z u+\mathbb Zv$
    of index $p$ in $\mathbb Z^2$, see above and Figure \ref{figtil}.
    It is therefore enough to understand the number of
    solutions encoded by every sublattice of index $p$ in $\mathbb Z^2$.
    Sublattices of index $p$ in $\mathbb Z^2$
    are in one-to-one correspondence with all $p+1$ elements of the projective line $\mathbb F_p\cup\{\infty\}$ over the finite field $\mathbb F_p$. An element $\mu$ encoding the slope $\mu=\frac{b}{a}$ of $[a:b]$ (using the obvious convention for $\mu=\infty$) defines the sublattice
    $\Lambda_\mu(p)=\{(x,y)\in\mathbb Z^2\ \vert\ ax+by\equiv 0\pmod p\}$
    of index $p$ in $\mathbb Z^2$. The two lattices $\Lambda_0(p)=\mathbb Z(p,0)+\mathbb Z(0,1)$ and $\Lambda_\infty(p)=\mathbb Z(1,0)+\mathbb Z(0,p)$
    with singular slopes $0,\infty\not\in \mathbb F_p^*$ give rise to the
    two degenerate solutions $p\cdot 1+0\cdot 0$ and $1\cdot p+0\cdot 0$.
    All other solutions $p=ab+cd$ correspond to sublattices of index $p$ in
    $\mathbb Z^2$ generated by $u=(a,c)$ in the open cone delimited by $y=0$ and $x=y$ (opening up in E-NE directions) and by $v=(-d,c)$ in the open cone
    delimited by $x=0$ and $y=-x$ (opening up in N-NW directions).
    The two lattices $\Lambda_1(p)$ and $\Lambda_{-1}(p)$ with
    self-inverse slopes $1$ and $-1$ have no such bases and thus do
    not correspond to a solution. Exactly one lattice in every
    pair of distinct lattices $\Lambda_\mu(p),\Lambda_{\mu^{-1}}(p)$
    with mutually
    inverse slopes $\mu,\mu^{-1}\in\mathbb F_p^*\setminus\{1,-1\}$
    has bases with generators in the two open E-NE and N-NW cones. We show then
    that exactly one of these bases corresponds to a
    solution in $\mathcal S_p$. Theorem \ref{thmmain} follows now easily.

    Colouring the set $\{(x,y)\ \vert xy(x-y)(x+y)>0\}$ consisting
    of the four open E-NE, N-NW, W-SW and S-SE cones and their opposites
    in black we get a picture of the four sails of an old windmill,
    see Figure \ref{figwm}. We prove therefore Theorem \ref{thmmain}
    by following in Don Quixote's heroic footsteps
    (see the beginning of Chapter 8 in \cite{C}).
    Cervantes forgot of course the explicit statement of
    Theorem \ref{thmmain} and botched the proof by
    sweeping all those bloody details under the rug.

%    Don Quixote's
%    famous battle (described at the beginning of Chapter 8 in \cite{C})
%    can be considered as the first attempt at proving
%    Theorem \ref{thmmain}. Cervantes swept of course all those bloody details
%    under the rug and forgot to state Theorem \ref{thmmain} explicitly.

    The author's serendipitous encounter with Don Quixote happened as
    follows: Euclid's algorithm applied to square-matrices of size $2$
(replacing iteratively a row/column by itself minus the other row/column)
with entries in the set $\{0,1,\ldots\}$ of non-negative integers
 yields a set
\begin{align*}
&\left\lbrace\left(\begin{array}{cc}a&b\\c&d\end{array}\right)\vert ad-bc=n,\
                                       \min(a,d)>\max(b,c),\ a,b,c,d\in\{0,1,2,\ldots\}\right\rbrace
\end{align*}
of
\begin{align}\label{formirrmat}
  &\sum_{d\vert n,\ d^2\geq n}\left(d+1-\frac{n}{d}\right)
\end{align}
\lq\lq irreducible\rq\rq matrices of given determinant $n\geq 1$, see
%Mathoverflow-entry
\cite{MO405035}.

A sign-change made out of curiosity 
in a naive program checking Formula (\ref{formirrmat})
for small values of $n$ suggested Theorem \ref{thmmain}.

A final additional Section links solutions
occuring in Theorem \ref{thmmain} with geometric properties of
the corresponding lattices.
    
%      Nailing down all those pesky details is tedious but elementary and is the content of the longest Section of the paper.

%      A sketchy final Section discusses algorithmic and statistical aspects
%      and ends with Theorem \ref{thmanalogue}
%      giving a formula for the number of solutions of 
%      $n=ab-cd$ with $(a,b,c,d)$ in
%      $\mathbb N^4$ such that $\min(a,b)>\max(c,d)$.
%      of Theorem \ref{thmmain}.

    \section{A few reminders on lattices in $\mathbb R^2$}

    This short Section contains a few elementary and
    well-known results on lattices in
    $\mathbb R^2$, recalled for the convenience of the reader. 

    A lattice denotes henceforth a discrete additive group
    $\mathbb Ze+\mathbb Zf$ generated by a an arbitrary basis $e,f$
of the Euclidean vector-space $\mathbb R^2$
endowed with the standard scalar product $\langle (u_x,u_y),(v_x,v_y)\rangle
=u_xv_x+u_yv_y$
of two vectors in $\mathbb R^2$. We will mainly work with
    sublattices of the integral lattice $\mathbb Z^2$ in $\mathbb R^2$.

    A \emph{minimal element} of a lattice $\Lambda$ is a shortest
    element in $\Lambda\setminus\{(0,0)\}$.

    An element $v$ of $\Lambda$ is \emph{primitive} if it is not contained in
    $k\Lambda$ for some natural integer $k>1$.
    
   A \emph{basis}
    of a lattice $\Lambda$ of rank (or dimension) $2$
    is a set $e,f$ of two elements in $\Lambda$ such that
    $\Lambda=\mathbb Z e+\mathbb Z f$.

The following result is a special case of Pick's Theorem\footnote{Pick's theorem gives the area $\frac{1}{2}b+i-1$ of a closed lattice polygon $P$
    (with vertices in $\mathbb Z^2$)
    containing $b$ lattice points $\partial P\cap \mathbb Z^2$
    in its boundary and $i$ lattice points in its interior.}:

  \begin{lem}\label{lemlatticebasis} Two linearly independent
    elements $e,f$ of a
    $2$-dimensional lattice $\Lambda$ 
    form a basis of the lattice $\Lambda$
    if and only if the triangle with vertices $(0,0),e,f$
    contains no other elements of $\Lambda$.
\end{lem}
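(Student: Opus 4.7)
The plan is to compare the area of the triangle $T$ with vertices $(0,0), e, f$ (which equals $\tfrac{1}{2}|\det(e,f)|$ in the standard metric) to the covolume of $\Lambda$. Since the closed parallelogram $P=\{se+tf:0\le s,t\le 1\}$ spanned by $e$ and $f$ is the union of $T$ and its image under $z\mapsto e+f-z$ (glued along a diagonal), and since the index $[\Lambda:\mathbb{Z}e+\mathbb{Z}f]$ equals the area of $P$ divided by the covolume of $\Lambda$, the basis condition is equivalent to a lattice-point count in $T$.

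The ``only if'' direction is immediate: if $e,f$ is a basis of $\Lambda$, every lattice point of $P$ has the form $se+tf$ with $s,t\in\mathbb{Z}\cap[0,1]$, hence lies in $\{0,e,f,e+f\}$, and so the only lattice points in $T\subset P$ are $0,e,f$.

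For the converse I would invoke Pick's theorem (as the footnote foreshadows), applied inside $\Lambda$ after a linear change of coordinates normalising $\Lambda$ to $\mathbb{Z}^{2}$. Under the hypothesis $T$ has $b=3$ boundary and $i=0$ interior lattice points, so its area in units of the covolume of $\Lambda$ is $\tfrac{3}{2}+0-1=\tfrac{1}{2}$; doubling, $P$ has area one covolume, i.e., $[\Lambda:\mathbb{Z}e+\mathbb{Z}f]=1$, so $e,f$ generate $\Lambda$.

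Should one wish to sidestep Pick's theorem, a self-contained argument is also short: given $g\in\Lambda$, write $g=se+tf$ with $s,t\in\mathbb{R}$ and reduce to $g'=\{s\}e+\{t\}f\in\Lambda$, where $\{x\}=x-\lfloor x\rfloor\in[0,1)$; depending on whether $\{s\}+\{t\}\le 1$ or $>1$, either $g'$ or $(e+f)-g'$ lies in $T$ and must (by hypothesis) coincide with $0$, $e$ or $f$, and since $\{s\},\{t\}<1$ this forces $g'=0$, i.e., $g\in\mathbb{Z}e+\mathbb{Z}f$. There is no serious obstacle; the one subtlety is to keep track of lattice points sitting on the open edges of $T$, which the case split on $\{s\}+\{t\}$ handles automatically.
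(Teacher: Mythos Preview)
Your proposal is correct, and your self-contained argument is essentially the paper's own proof: the paper observes that $e,f$ generate $\Lambda$ if and only if the parallelogram $\mathcal P$ meets $\Lambda$ only in its four vertices (your fractional-part reduction is exactly this fundamental-domain statement spelled out), and then uses the central involution $x\mapsto e+f-x$ to pass from the parallelogram to the triangle, just as your case split on $\{s\}+\{t\}\lessgtr 1$ does. Your Pick-theorem route is a valid alternative the paper alludes to in its footnote but does not actually invoke in the proof.
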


\begin{proof} The parallelogram $\mathcal P$ with vertices
  $(0,0),e,f,e+f$ is a fundamental domain for the sublattice
  $\mathbb Z e+\mathbb Z f$ of $\Lambda$ generated by $e$ and $f$.
  The two elements $e,f$ generate therefore $\Lambda$ if and only if
  $\Lambda$ intersects $\mathcal P$ only in its four vertices.
  
  Since $\mathcal P$ and $\Lambda$ are invariant under the affine involution
  $x\longmapsto -x+e+f$ exchanging the two triangles with vertices $(0,0),e,f$
  and $e,f,e+f$, the parallelogram $\mathcal P$ intersects
  $\Lambda$ exactly in its vertices if and only if the triangle
  defined by $(0,0),e,f$ intersects $\Lambda$ exactly in its vertices.
\end{proof}

\begin{prop}\label{propsubl} A lattice $M$ in
  $\mathbb R^2$ has exactly $p+1$ different sublattices
  of index a prime number $p$. These sublattices are in one-to-one correspondence
  with the set of lines in $M/pM$ representing all elements of the
  projective line over the finite field $\mathbb F_p$.
\end{prop}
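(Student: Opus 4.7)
The plan is to identify sublattices of index $p$ in $M$ with $\mathbb{F}_p$-lines in the quotient $M/pM$, and then count the latter.

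First I would observe that for any sublattice $L\subset M$ of index $p$, the quotient $M/L$ has order $p$, so every element of $M/L$ is annihilated by $p$. This forces $pM\subseteq L$. Consequently, the map $L\mapsto L/pM$ gives a bijection between sublattices $L$ of index $p$ in $M$ and subgroups of index $p$ in the finite abelian group $M/pM$.

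Next I would use the fact that $M$ is a free $\mathbb{Z}$-module of rank $2$ (pick any basis provided by $M$'s lattice structure; one may invoke Lemma \ref{lemlatticebasis} if one wants a geometric certificate). This yields an isomorphism $M/pM\cong(\mathbb{Z}/p\mathbb{Z})^2=\mathbb{F}_p^2$, a two-dimensional $\mathbb{F}_p$-vector space. Under this identification, subgroups of $M/pM$ of order $p$ (equivalently index $p$, since $|M/pM|=p^2$) coincide with one-dimensional $\mathbb{F}_p$-subspaces, because any subgroup of order $p$ in an $\mathbb{F}_p$-vector space is automatically closed under scalar multiplication by $\mathbb{F}_p$.

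Finally I would count: the one-dimensional subspaces of $\mathbb{F}_p^2$ are the points of the projective line $\mathbb{P}^1(\mathbb{F}_p)$. Each line is spanned by a nonzero vector, and two nonzero vectors span the same line iff they differ by an element of $\mathbb{F}_p^*$, giving $(p^2-1)/(p-1)=p+1$ lines. Composing the bijections yields the desired correspondence and count. I do not anticipate any serious obstacle here; the only step one must be a bit careful about is the identification $pM\subseteq L$, which uses primality only through the fact that every element of a cyclic group of order $p$ is killed by $p$ (actually this works for any index).
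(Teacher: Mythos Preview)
Your argument is correct and follows essentially the same route as the paper: both pass through $pM\subseteq L$ to reduce to subgroups of index $p$ in $M/pM\cong\mathbb F_p^2$, identify these with one-dimensional subspaces (the paper phrases them as kernels of linear surjections onto $\mathbb F_p$), and count the $p+1$ points of $\mathbb P^1(\mathbb F_p)$. The reference to Lemma~\ref{lemlatticebasis} is unnecessary here, and your parenthetical remark that ``this works for any index'' is true only for the inclusion $pM\subseteq L$, not for the subsequent identification with lines, which does use that $p$ is prime.
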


\begin{proof} A sublattice $\Lambda$ of prime index $p$ in a $2$-dimensional
  lattice $M$ gives
  rise to a quotient group $M/\Lambda$ isomorphic to the additive
  group $\mathbb Z/p\mathbb Z$. Since $pM$ is
  contained in the kernel of the quotient
  map $M\longmapsto M/\Lambda$, subgroups of index $p$ in $M$ are in bijection
  with kernels of linear surjections from the $2$-dimensional vector-space
  $M/pM$ over $\mathbb F_p$ onto $\mathbb F_p$ considered as a
  $1$-dimensional vector-space.
  The set of all subgroups of index $p$ in $M$ is thus in bijection
  with the set of $1$-dimensional subspaces in $\mathbb F_p^2$
  representing all possible kernels.
  Such subspaces represent all $p+1$ points of the projective line
  over $\mathbb F_p$.
\end{proof}

The Euclidean algorithm computes the positive generator of the cyclic subgroup
$\mathbb Z a+\mathbb Z b$ of $\mathbb Z$
generated by two integers $a$ and $b$.
\emph{Gau\ss ian lattice reduction} does essentially the same for lattices
in the Euclidean space $\mathbb R^2$: Given
two linearly independent\footnote{Gau\ss ian lattice reduction applied
  to two linearly dependent vectors boils down 
to the Euclidean algorithm.} vectors
$e,f$ in $\mathbb R^2$, the Gau\ss ian algorithm produces
a \emph{reduced basis} $r,s$
of the lattice $\mathbb Z e+\mathbb Z f=\mathbb Z r+\mathbb Z s$
defining two (not necessarily unique) distinct shortest pairs $\pm r,\pm s$
of primitive vectors. The Gau\ss ian algorithm starts with a basis $e,f$
of $\Lambda=\mathbb Z e+\mathbb Z f$ and 
iterates the following two steps until stabilization:
\begin{itemize}
\item{} Exchange $e$ and $f$ if $f$ is strictly longer than $e$.
\item{} Replace $e$ by $e+kf$ if $e+kf$ (for $k$ an integer)
  is strictly shorter than $e$
  (the optimal choice for $k$ is given by $k\in\mathbb Z$ such that
  $\left\vert k+\frac{\langle f,e\rangle}{\langle f,f\rangle}\right\vert$ is at most equal to $\frac{1}{2}$).
\end{itemize}

Finally, the following (obvious) result will also be needed a few times:

\begin{prop}\label{propindex} The sublattice $\mathbb Z(\alpha e+\beta f)
  +\mathbb Z(\gamma e+\delta f)$ of a lattice $\Lambda=\mathbb Z e+\mathbb Z f$
  generated by two linearly independent vectors
  $\alpha e+\beta f$ and
  $\gamma e+\delta f$ (with $\alpha,\beta,\gamma,\delta$ in $\mathbb Z$)
  has index $\vert \alpha\delta-\beta\gamma\vert$
  in $\Lambda$.
\end{prop}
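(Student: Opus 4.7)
The plan is to identify the index with the absolute value of the determinant of the change-of-basis matrix, using the fact that the index equals the ratio of areas of fundamental parallelograms. Write $u=\alpha e+\beta f$ and $v=\gamma e+\delta f$. Since the oriented area form $(x,y)\longmapsto \det(x,y)$ on $\mathbb R^2$ is bilinear and alternating, multilinearity gives
\[
\det(u,v)=(\alpha\delta-\beta\gamma)\,\det(e,f),
\]
so the fundamental parallelogram of $M=\mathbb Z u+\mathbb Z v$ has area $\vert\alpha\delta-\beta\gamma\vert$ times that of $\Lambda=\mathbb Z e+\mathbb Z f$. It remains to identify this area ratio with the index $[\Lambda:M]$.

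For that I would invoke the Smith normal form of the $2\times 2$ integer matrix $\left(\begin{smallmatrix}\alpha&\beta\\ \gamma&\delta\end{smallmatrix}\right)$: there exist matrices $U,V\in \mathrm{GL}_2(\mathbb Z)$ with $U\left(\begin{smallmatrix}\alpha&\beta\\ \gamma&\delta\end{smallmatrix}\right)V=\mathrm{diag}(d_1,d_2)$ for positive integers $d_1\mid d_2$. This means that the basis $(e,f)$ of $\Lambda$ can be replaced by a new basis $(e',f')$ of $\Lambda$ (obtained via $V$) and the generators $(u,v)$ of $M$ by a new basis $(u',v')$ of $M$ (obtained via $U$) such that $u'=d_1 e'$ and $v'=d_2 f'$. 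Then $\Lambda/M\cong (\mathbb Z/d_1\mathbb Z)\oplus(\mathbb Z/d_2\mathbb Z)$, hence $[\Lambda:M]=d_1 d_2$. On the other hand the determinant is unchanged (up to sign) by multiplication with unimodular matrices, so $d_1 d_2=\vert\det U\cdot(\alpha\delta-\beta\gamma)\cdot\det V\vert=\vert\alpha\delta-\beta\gamma\vert$, which concludes the proof.

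The only mildly non-trivial ingredient is the Smith normal form (equivalently, the classification of finitely generated abelian groups in rank two, which is a one-step induction on the Euclidean algorithm applied to rows and columns). There is no real obstacle in rank $2$; one can alternatively avoid Smith normal form altogether by appealing to Lemma~\ref{lemlatticebasis} and counting lattice points in the fundamental parallelogram of $M$, but the determinantal argument above is the shortest path.
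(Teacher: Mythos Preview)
Your argument is correct, and it is essentially the same approach as the paper's: the paper simply observes that the case $\alpha\beta\gamma\delta=0$ is obvious and reduces the general case to it by elementary operations on the generators $u,v$, which is exactly the row-reduction half of your Smith normal form argument. Your version is more thorough (you also use column operations to reach a diagonal matrix and make the quotient $\Lambda/M$ explicit), while the opening paragraph on areas is not actually needed once you invoke Smith normal form.
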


\begin{proof} The result is obvious if $\alpha\beta\gamma\delta=0$.
  The general case can be reduced by elementary operations on the generators
  $u=\alpha e+\beta f$ and
  $v=\gamma e+\delta f$ to the obvious case.
\end{proof}

\section{Grace's proof}\label{sectGrace}
      
For the convenience of the reader, we recall
Grace's proof (of Corollary \ref{oldhat}),
as given in \cite{HW} (see the fourth proof of Theorem 366 in \cite{HW} or
\cite{Gra})\footnote{The only difference is the fact that Grace
  admits the existence of square roots of $-1$ modulo primes congruent to
  $1\pmod 4$. The author succumbed to the siren song of a well-known
  explicit construction (based on Wilson's Theorem) for such a square-root.}.

\begin{proof} Given an odd prime number $p$ we have
$\left(\left(\frac{p-1}{2}\right)!\right)^2(-1)^{(p-1)/2}\equiv (p-1)!\pmod p$
which equals $-1\pmod p$ by Wilson's Theorem. For $p$ a prime
number congruent to $1\pmod 4$,
the integer $\iota=\left(\frac{p-1}{2}\right)!$ (and its opposite)
is a square root of $-1$ in the finite field $\mathbb F_p$.
The kernel of the homomorphism
$\mathbb Z^2\ni(x,y)\longmapsto x+\iota y\pmod p$
is a sublattice $\Lambda$ of index $p$ in $\mathbb Z^2$.
Since $\iota(x+\iota y)\equiv -y+\iota x\pmod p$,
we have $(x,y)\in \Lambda$ if and only if $(-y,x)\in\Lambda$. The lattice
$\Lambda$ is therefore invariant under quarter-turns 
(rotations of order $4$ by $\pm 90$ degrees).
Let $(a,b)$ be a non-zero element of minimal
length in $\Lambda$. Invariance under quarter-turns of
$\Lambda$ implies that $(-b,a)$ is also an element of $\Lambda$.
Length-minimality of $(a,b)$ and $(-b,a)$ implies that the
triangle with vertices $(0,0),(a,b),(-b,a)$ contains no other
element of $\Lambda$. Lemma \ref{lemlatticebasis}
shows that the two vectors $(a,b)$
and $(-b,a)$ generate $\Lambda$. Proposition \ref{propindex}
implies now that $a^2+b^2$ is equal to the index
$p$ of the sublattice $\Lambda$ in $\mathbb Z^2$.
%and has a reduced basis $(a,b),(-b,a)$ given by two
%minimal elements in $\Lambda\setminus\{(0,0)\}$ which are orthogonal.
%This implies $a^2+b^2=
%\det\left(\begin{array}{cc}a&b\\-b&a\end{array}\right)=p$
%since $\Lambda$ is of index $p$ in $\mathbb Z^2$.
\end{proof}

Grace's proof is effective: Given a prime $p\equiv 1\pmod 4$, we can 
use quadratic reciprocity for finding a non-square $n$ modulo $p$.
We obtain a square root $\iota$ of $-1$ in $\mathbb F_p$ by
computing $\iota\equiv n^{(p-1)/4}\pmod p$ using fast exponentiation.
We get now a solution by considering an element $(a,b)$  
of a reduced basis (obtained by 
Gau\ss ian lattice reduction) of the lattice generated by $(p,0)$ and
$(-\iota,1)$.

\section{Interlacedness}

%%%%%%%%%%%%%%%%%%%%%%%%%%%%%%%%%%%%
\begin{defn} Two unordered
bases $f_1,f_2$ and $g_1,g_2$ of $\mathbb R^2$ are \emph{interlaced}
if $f_1,f_2,g_1,g_2$ represent four distinct points of the real
projective line such that the two projective points represented by
$f_1,f_2$ separate the two projective points represented by $g_1,g_2$.
\end{defn}

Interlacedness
can be defined equivalently as follows: Colour the two lines $\mathbb R f_i$
associated to the first basis $f_1,f_2$ fuchsia
and colour the two lines $\mathbb R g_i$ green. Then the set
$\mathbb R f_1\cup \mathbb R f_2\cup\mathbb R g_1\cup \mathbb R g_2$
should contain four different lines and colours should alternate.

\begin{exple} The standard basis $(1,0),(0,1)$ of $\mathbb R^2$ is interlaced
with the basis $(-1,2),(6,1)$. The standard basis $(1,0),(0,1)$ is however not
interlaced with the basis $(2,3),(-1,-1)$.
\end{exple}

\begin{lem}\label{lemnoncrossing} Two bases $f_1,f_2$ and $g_1,g_2$
  of a $2$-dimensional 
  lattice $\Lambda=\mathbb Z f_1+\mathbb Z f_2=\mathbb Z g_1+\mathbb Z g_2$
  are never interlaced.
\end{lem}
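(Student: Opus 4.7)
The plan is to pit interlacedness against the fact that any two bases of the same lattice $\Lambda$ are related by a unimodular change of coordinates. I would begin by writing $g_i = a_i f_1 + b_i f_2$ with integer coefficients $a_i, b_i$. Since $\{g_1, g_2\}$ is also a basis of $\Lambda$, Proposition \ref{propindex} forces $|a_1 b_2 - a_2 b_1| = 1$.

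Next I would rephrase the interlacedness condition in the $(f_1, f_2)$-coordinate system. There the lines $\mathbb{R} f_1$ and $\mathbb{R} f_2$ become the two coordinate axes, and they split the real projective line into two open arcs corresponding respectively to strictly positive and strictly negative values of the slope $b_i / a_i$ of $g_i$. The requirement of four distinct projective points in which $\{f_1, f_2\}$ separates $\{g_1, g_2\}$ becomes: all four integers $a_1, b_1, a_2, b_2$ are nonzero, and the two products $a_1 b_1$ and $a_2 b_2$ have opposite signs.

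The contradiction then falls out in one line. From $a_1 b_1$ and $a_2 b_2$ having opposite signs one gets $(a_1 b_2)(a_2 b_1) = a_1 a_2 b_1 b_2 < 0$, so $a_1 b_2$ and $a_2 b_1$ themselves have opposite signs. Consequently
\[
|a_1 b_2 - a_2 b_1| \;=\; |a_1 b_2| + |a_2 b_1| \;\geq\; 1 + 1 \;=\; 2,
\]
contradicting $|a_1 b_2 - a_2 b_1| = 1$.

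The only delicate step is the translation of the projective-geometric notion of separating pairs into the clean sign condition on the coordinates $(a_i, b_i)$; once that dictionary is fixed, the rest is a short arithmetic observation. A purely geometric alternative would appeal to Lemma \ref{lemlatticebasis} applied to the fundamental parallelogram of $\{f_1, f_2\}$, but unpacking interlacedness through that route seems to lead back to the same sign bookkeeping, so the algebraic route above looks cleanest.
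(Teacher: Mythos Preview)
Your proof is correct and essentially identical to the paper's: both express one basis in integer coordinates with respect to the other, translate interlacedness into a sign condition on those coefficients, and conclude that the change-of-basis determinant has absolute value at least $2$, contradicting Proposition~\ref{propindex}. The only cosmetic difference is that the paper normalizes by placing $f_1,f_2$ in the open cones spanned by $g_1,\pm g_2$ (so all four coefficients become strictly positive), whereas you phrase the same thing via the sign of the slopes $b_i/a_i$ in $(f_1,f_2)$-coordinates.
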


\begin{proof} Suppose that a lattice $\Lambda=\mathbb Z f_1+\mathbb Z f_2
  =\mathbb Z g_1+\mathbb Z g_2$  has two bases $f_1,f_2$ and $g_1,g_2$
  which are interlaced.
  After replacing $f_1$ and $f_2$ by their negatives if necessary and
  perhaps after exchanging $f_1$ with
  $f_2$ we can suppose that $f_1$ belongs to the open cone
  spanned by $g_1$ and $-g_2$ and $f_2$ belongs to the open cone
  spanned by $g_1$ and $g_2$. Since we are working with bases of a lattice $\Lambda$,
  there exist strictly positive
  integers $\alpha,\beta,\gamma,\delta$ such that
  $f_1=\alpha g_1-\beta g_2$
  and $f_2=\gamma g_1+\delta g_2$ which can be rewritten as
  $\left(\begin{array}{c}f_1\\f_2\end{array}\right)=\left(\begin{array}{cc}\alpha&-\beta\\ \gamma&\delta\end{array}\right)\left(\begin{array}{c}g_1\\g_2\end{array}\right)$. Proposition \ref{propindex} applied to the inequality $\det\left(\begin{array}{cc}\alpha&-\beta\\ \gamma&\delta\end{array}\right)=\alpha\delta+\beta\gamma\geq 2$ implies now that $\mathbb Z f_1+\mathbb Z f_2$ is a strict sublattice of
  index at least $2$ in $\Lambda=\mathbb Z g_1+\mathbb Z g_2$.
\end{proof}

%%%%%%%%%%%%%%%%%%%%%%%%%%%%%%%%%%%%%%

    \section{Proof of Theorem \ref{thmmain}}\label{sectproof}

%\begin{rem} Lemma \ref{lemnoncrossing} follows also from the inequality
%  $\det\left(\begin{array}{cc}\alpha&\beta\\\gamma&-\delta\end{array}\right)=
%  -\alpha\delta-\beta\gamma<-1$.  \end{rem}

We consider the eight open cones
of $\mathbb R^2$ delimited by the four lines $x=0$, $y=0$, $x=y$ and $x=-y$.
  We call these eight open cones \emph{windmill cones}
%  (in the hope of turning the content of this paper into a piece of
%  loftier mathematics)
  and we colour them alternately black
  and white, starting with a black E-NE windmill cone
  $\{(x,y)\ \vert\ 0<y<x\}$ (using the conventions
  of wind-roses), as illustrated in Figure \ref{figwm}.
  \begin{figure}[h]
  \epsfysize=3cm
\center{\epsfbox{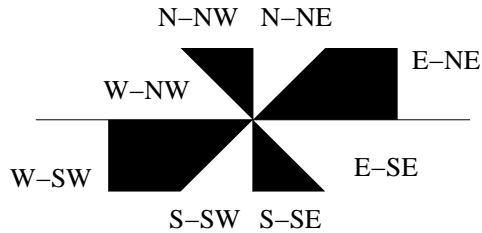}}
\caption{The four black windmill cones E-NE, N-NW, W-SW, S-SE and the four white windmill cones N-NE, W-NW, S-SW, E-SE.}\label{figwm}
\end{figure}

  A basis $e,f$ of $\mathbb R^2$ is a \emph{black windmill basis} if $e$ and $f$ are contained in the open upper half-plane and
  if one element in $\{e,f\}$ belongs to the open black E-NE windmill cone and the other element in $\{e,f\}$ belongs to
  the open black N-NW windmill cone. Similarly, a \emph{white windmill} basis contains
  an element in the open white N-NE windmill cone and an element in the
  open white W-NW windmill cone.
%  of the  upper half-plane.

  A $2$-dimensional lattice $\Lambda$ in $\mathbb R^2$
  has a black (respectively white)
  windmill basis if $\Lambda=\mathbb Z e+\mathbb Z f$ is generated
  by a black (respectively white) windmill basis $e,f$.

\begin{figure}[h]
  \epsfysize=6cm
\center{\epsfbox{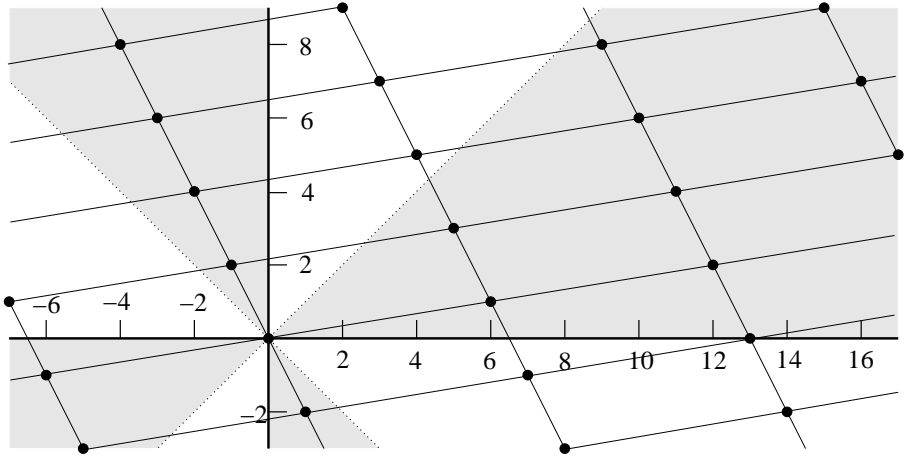}}
\caption{The lattice $\Lambda=\mathbb Z(-1,2)+\mathbb Z(6,1)$.}\label{figrunexple}
\end{figure}

  We illustrate the notion of windmill bases with the following example,
  henceforth used as a running example: Consider the lattice
  \begin{align}\label{runningexple}
    \Lambda&=\{(x,y)\in\mathbb Z^2\ \vert\ x+7y\equiv 0\pmod{13}\}
  \end{align}
  depicted in Figure \ref{figrunexple} with shaded
  black windmill cones.
  The lattice $\Lambda$
  has two black windmill bases given by $(-1,2),(5,3)$ and by $(-1,2),(6,1)$.

  The basis $(-1,2),(-6,-1)$ is not a black windmill basis: $(-6,-1)$
  does not belong to the upper half-plane.
  
  The basis $(5,3),(6,1)$ is not a windmill basis: Both basis vectors
  are contained in the open black E-NE windmill cone.

  The basis $(-1,2),(4,5)$ is not a windmill basis: $(4,5)$ belongs to the open
  white N-NE windmill cone and $(-1,2)$ belongs to the open black N-NW windmill
  cone.

\begin{lem}\label{lemonecolour} All windmill bases of a lattice have
  the same colour.
\end{lem}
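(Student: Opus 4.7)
My plan is to argue by contradiction: if a lattice $\Lambda$ admits both a black windmill basis $e_b, f_b$ and a white windmill basis $e_w, f_w$, I will show that these two (unordered) bases are interlaced in the sense of the preceding definition, which is impossible by Lemma \ref{lemnoncrossing}.

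The first step is to encode each of the four upper-half-plane windmill cones by the interval of slopes $y/x$ it sweeps out on the real projective line $\mathbb{RP}^1 = \mathbb R \cup \{\infty\}$. A direct inspection of the definitions yields: black E-NE $\leftrightarrow (0,1)$, white N-NE $\leftrightarrow (1,+\infty)$, black N-NW $\leftrightarrow (-\infty,-1)$, and white W-NW $\leftrightarrow (-1,0)$. These four open arcs are pairwise disjoint, and one trip around $\mathbb{RP}^1$, say counterclockwise starting at slope $0$ and passing through $1$, $\infty$, $-1$ before returning to $0$, meets them in the cyclic order black, white, black, white.

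Consequently, the four vectors $e_b, e_w, f_b, f_w$ (with $e_b$ in the black E-NE cone, $e_w$ in the white N-NE cone, $f_b$ in the black N-NW cone, $f_w$ in the white W-NW cone) determine four pairwise distinct projective points appearing on $\mathbb{RP}^1$ in the cyclic order $e_b, e_w, f_b, f_w$. The pair $\{e_b, f_b\}$ therefore separates the pair $\{e_w, f_w\}$, which is precisely the interlacedness condition. Since both bases generate the same lattice $\Lambda$, Lemma \ref{lemnoncrossing} is contradicted, proving that the two colours cannot coexist.

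The only real obstacle is careful bookkeeping --- matching each cone with the correct slope interval and checking that the black/white colours alternate around $\mathbb{RP}^1$. Once this cyclic picture is fixed, the appeal to Lemma \ref{lemnoncrossing} is immediate and there is nothing else to compute.
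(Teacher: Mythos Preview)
Your proof is correct and follows exactly the paper's approach: the paper's proof is the single line ``windmill bases of different colours are obviously interlaced, contradicting Lemma~\ref{lemnoncrossing}'', and you have simply unpacked the word ``obviously'' by writing down the slope intervals and checking the cyclic alternation on $\mathbb{RP}^1$. There is no substantive difference.
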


\begin{proof} Otherwise we get a contradiction with Lemma \ref{lemnoncrossing}
  since windmill bases of different colours are obviously interlaced.
\end{proof}

An odd prime number $p$ and an element $\mu$ of $\mathbb F_p$ (henceforth
often identified with $\{0,\ldots,p-1\}$)
define a sub-lattice 
\begin{align}\label{deflambdamu}
  \Lambda_\mu(p)=\{(x,y)\in\mathbb Z,\ \vert\ x+\mu y\equiv 0\pmod p\}
\end{align}
of index $p$ in $\mathbb Z^2$. We set $\Lambda_\infty(p)=\{(x,y)\in\mathbb Z,\ \vert\ y\equiv 0\pmod p\}$. We have thus $\Lambda=\Lambda_7(13)$ for our
running example given by (\ref{runningexple}). All $p+1$ lattices $\Lambda_0(p),\ldots,
\Lambda_{p-1}(p),\Lambda_\infty(p)$ are distinct and $\mathbb Z^2$ contains
no other sublattices of prime index $p$, see Proposition \ref{propsubl}.

\begin{prop}\label{propnobasis} The four lattices $\Lambda_0(p),\Lambda_\infty(p),\Lambda_1(p),\Lambda_{p-1}(p)$ have no windmill basis.
\end{prop}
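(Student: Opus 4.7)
The plan is to apply Proposition~\ref{propindex} to each of the four lattices in turn, after choosing a convenient basis. For each lattice $\Lambda_\star(p)$ I would write an alleged windmill basis $(e,f)$ in coordinates relative to that fixed basis and show that the resulting coordinate determinant has absolute value at least $2$, contradicting $\mathbb{Z}e+\mathbb{Z}f=\Lambda_\star(p)$.

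For $\Lambda_0(p)=\mathbb{Z}(p,0)+\mathbb{Z}(0,1)$, and symmetrically for $\Lambda_\infty(p)$, this is immediate. A vector $(x,y)\in\Lambda_0(p)$ belonging to any open windmill cone satisfies $x\ne 0$ (the cones avoid the $y$-axis) and $y>0$ (upper half-plane), so its coordinates in the chosen basis are $(x/p,\,y)$, both non-zero integers. Since a windmill basis places $e$ and $f$ in cones on opposite sides of the $y$-axis, the first coordinates of $e$ and $f$ have opposite signs while the second coordinates are both positive. Proposition~\ref{propindex} then gives index $|a_e b_f-a_f b_e|=a_e b_f+|a_f|\,b_e\ge 2$.

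For $\Lambda_1(p)$ I would use the basis $\{(1,-1),(0,p)\}$, so that every $v\in\Lambda_1(p)$ has the form $v=a(1,-1)+b(0,p)=(a,\,bp-a)$. Translating the open-cone inequalities on $(v_x,v_y)$ into sign and magnitude constraints on $(a,b)$ is routine: both black cones E-NE and N-NW force $b\ge 1$ together with a definite sign for $a$, so the black case reduces again to a sum of two positive integers $\ge 2$. In the white case, the N-NE cone yields $b_e\ge 1$ with the mild bound $2a_e<b_e p$, while the W-NW cone combines $v_y>0$ with $v_y<-v_x$ to force $b_f\le -1$ together with the stronger multiplicative bound $|a_f|>|b_f|\,p$. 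The determinant then satisfies
\[
  |a_e b_f-a_f b_e|\;=\;|a_f|\,b_e-a_e|b_f|\;>\;|b_f|\,p\cdot b_e-\frac{b_e p}{2}\cdot|b_f|\;=\;\frac{b_e|b_f|\,p}{2}\;\ge\;\frac{p}{2}\;>\;1,
\]
again a contradiction. The treatment of $\Lambda_{p-1}(p)$ with the basis $\{(1,1),(0,p)\}$ is completely parallel, with the roles of the two colours swapped: the white case becomes the easy one and the black case becomes the one requiring the multiplicative estimate.

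The only non-routine step is the white case for $\Lambda_1(p)$ (and symmetrically the black case for $\Lambda_{p-1}(p)$), where the two terms of the coordinate determinant have opposite signs and the easy additive bound is unavailable; there one must extract the stronger inequality $|a_f|>|b_f|\,p$ from the specific geometry of the W-NW cone in order to beat the competing term. Everywhere else the determinant is visibly a sum of two positive integers, each $\ge 1$.
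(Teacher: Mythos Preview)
Your argument is correct. The case split is handled cleanly, and the one delicate spot---the white case for $\Lambda_1(p)$, where the two terms of the coordinate determinant compete---is resolved correctly: from $v_y<-v_x$ and $v_y>0$ you do get $b_f\le -1$ and $|a_f|>|b_f|\,p$, and together with $a_e<b_e p/2$ this forces $|a_f|\,b_e-a_e|b_f|>b_e|b_f|\,p/2\ge p/2>1$. (A tiny remark: the equality $|a_e b_f-a_f b_e|=|a_f|\,b_e-a_e|b_f|$ is really just the signed identity $a_e b_f-a_f b_e=|a_f|\,b_e-a_e|b_f|$ followed by the observation that the right-hand side is positive; you might want to say so explicitly.)

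Your route, however, is genuinely different from the paper's. The paper observes that each of $\Lambda_0(p)$, $\Lambda_\infty(p)$, $\Lambda_1(p)$, $\Lambda_{p-1}(p)$ is invariant under the orthogonal reflection across one of the four lines $x=0$, $y=0$, $x=y$, $x=-y$ bounding the windmill cones; such a reflection (after obvious sign changes) swaps black and white windmill bases, so a windmill basis of either colour would produce one of the other colour, contradicting Lemma~\ref{lemonecolour}. This is a two-line symmetry argument that leans on the interlacing machinery already developed. Your proof, by contrast, is a direct index computation using only Proposition~\ref{propindex}: more hands-on and requiring a small case analysis, but entirely self-contained and independent of Lemmas~\ref{lemnoncrossing} and~\ref{lemonecolour}.
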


\begin{proof} Each of these four lattices is invariant under an orthogonal
reflection with respect to a line
separating black and white windmill cones.
Such orthogonal reflections, followed by obvious sign changes,
exchange white and black windmill bases.
Lemma \ref{lemonecolour} 
shows therefore non-existence of (black or white) 
windmill bases for these lattices.
\end{proof}

Proposition \ref{propnobasis} is optimal by the following result:

\begin{prop}\label{propexistencebasis} Every lattice $\Lambda_\mu(p)$
  with $2\leq \mu\leq p-2$ has a windmill basis.
\end{prop}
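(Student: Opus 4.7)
The plan is to combine Gau\ss ian lattice reduction with a case analysis on the pair of open windmill cones containing the basis vectors. A preliminary lemma records that, for $\mu \in \{2,\ldots,p-2\}$, every non-zero vector of $\Lambda_\mu(p)$ lying on one of the four windmill lines $x=0$, $y=0$, $y=\pm x$ has Euclidean length at least $p$. For the axes this follows immediately from $x+\mu y\equiv 0\pmod p$; for the diagonals $(t,\pm t)\in\Lambda_\mu(p)$ one has $t(1\pm\mu)\equiv 0\pmod p$, and the hypothesis $\mu\not\equiv\mp 1\pmod p$ forces $p\mid t$. Since $(-\mu,1)\in\Lambda_\mu(p)$ has length $\sqrt{\mu^2+1}<p$, the lattice minimum is strictly less than $p$ and so lies in an open windmill cone.

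Next, I would apply Gau\ss ian reduction to obtain a reduced basis $(v_1,v_2)$ with $|v_1|\leq|v_2|$ and $2|\langle v_1,v_2\rangle|\leq|v_1|^2$, so the angle between $v_1$ and $v_2$ belongs to $[60^\circ,120^\circ]$. The preliminary lemma shows $v_1$ is non-singular; for $v_2$, were it on a critical line, the basis condition $|\det(v_1,v_2)|=p$ combined with the lemma would force $v_2\in\{(\pm p,0),(0,\pm p),(\pm p,\pm p)\}$, and each option, plugged into the reduction inequality together with $|v_1|\leq|v_2|$, yields a contradiction. Independently flipping signs then places both vectors in the open upper half-plane, and we label them so that $v_1$ precedes $v_2$ in angular order.

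The heart of the proof is a case analysis on the pair of open upper windmill cones $(c_1,c_2)$ containing $(v_1,v_2)$. In the good cases (E-NE, N-NW) and (N-NE, W-NW), the reduced basis is already a black, respectively white, windmill basis. In each of the four mixed cases --- (E-NE, N-NE), (E-NE, W-NW), (N-NE, N-NW), (N-NW, W-NW) --- I would modify the basis by $\pm v_1$ or $\pm v_2$ (still a basis by Proposition \ref{propindex}) to produce a windmill one. There are essentially two prototypes. In the case (E-NE, N-NE), the reduction inequality forces $v_2-v_1$ into the open upper-left quadrant, and the preliminary lemma rules out the diagonal $y=-x$ (such a vector has length $\geq p\sqrt{2}$ and coordinate signs incompatible with $v_1\in$ E-NE, $v_2\in$ N-NE), so $v_2-v_1$ lies in N-NW, giving the black basis $(v_1,v_2-v_1)$, or in W-NW, giving the white basis $(v_2-v_1,v_2)$. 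In the case (E-NE, W-NW), the same inequalities force $v_1+v_2$ off the diagonals and out of the cones containing $v_1$ or $v_2$ (otherwise a coordinate sign or length bound fails), leaving N-NE or N-NW, which gives the white basis $(v_1+v_2,v_2)$ or the black basis $(v_1,v_1+v_2)$. The remaining two mixed cases follow by applying the reflection $\mu\mapsto -\mu$, which swaps E-NE with W-NW and N-NE with N-NW and so reduces them to the two prototypes.

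The main obstacle is the mixed-case bookkeeping: in each case one must identify the correct modification, verify via the reduction inequalities that the modified vector has the correct coordinate signs, and invoke the preliminary lemma to exclude the separating critical line. Each verification is short, but the four mixed cases must each be treated in turn; the key takeaway is that the hypothesis $\mu\in\{2,\ldots,p-2\}$ enters the proof exactly through the length lower bound on singular vectors supplied by the preliminary lemma.
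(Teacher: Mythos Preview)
Your approach via Gau\ss ian reduction is sound and genuinely different from the paper's proof. The paper argues by area: each open windmill cone intersected with $[-p,p]^2$ is a triangle of area $p^2/2$, and since the boundary of this triangle meets $\Lambda_\mu(p)$ only in its three vertices (by what you call the preliminary lemma), Lemma~\ref{lemlatticebasis} forces an interior lattice point. One then chooses a parallelogram $\pm e,\pm f$ of minimal area with $e,f$ in two perpendicular monochromatic cones, and shows by minimality that $e,f$ is already a basis. This argument is purely existential; your route is more constructive and in fact anticipates the paper's complement in Section~\ref{sectproof}+1, where Lemma~\ref{lemgenpos} proves essentially your case analysis in one stroke by observing that the minimal Voronoi vector $e$ makes angles exceeding $\pi/4$ with both of the other Voronoi vectors $f,g$, so either one of $f,g$ shares $e$'s colour, or $f$ and $g$ lie in the two opposite-colour cones and form a windmill basis themselves.

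There is one concrete slip in your symmetry reduction. The reflection $\mu\mapsto -\mu$ (i.e.\ $(x,y)\mapsto(-x,y)$ up to sign) swaps E-NE with W-NW and N-NE with N-NW, so it exchanges the mixed case (N-NW,\,W-NW) with your prototype (E-NE,\,N-NE); but it \emph{fixes} both (E-NE,\,W-NW) and (N-NE,\,N-NW). Hence (N-NE,\,N-NW) does not reduce to either of your prototypes by this reflection. The fix is immediate: use the diagonal reflection $(x,y)\mapsto(y,x)$, which sends $\Lambda_\mu(p)$ to $\Lambda_{\mu^{-1}}(p)$ and swaps the pairs $\{\text{E-NE},\text{N-NE}\}$ and $\{\text{N-NW},\text{W-NW}\}$ (after adjusting signs), thereby carrying (N-NE,\,N-NW) to (E-NE,\,W-NW). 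Alternatively, a quarter-turn works. With this correction your case analysis is complete; the individual verifications you sketch (e.g.\ that $v_2-v_1$ lands in the open second quadrant in the first prototype, or that $v_1+v_2$ avoids the closed E-NE and W-NW cones in the second) all go through using the reduction inequality $2\lvert\langle v_1,v_2\rangle\rvert\le\min(\lvert v_1\rvert^2,\lvert v_2\rvert^2)$ together with the bounds $\lvert v_1\rvert,\lvert v_2\rvert<p$ that follow from your preliminary lemma.
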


\begin{proof} $\Lambda_\mu(p)$ contains obviously no elements of the form
  $(\pm m,0)$ or $(\pm m,\pm p)$ with $m$ in $\{1,2,\ldots,p-1\}$.
  Since $p$ is prime, $\Lambda_\mu(p)$ contains no elements of
  the form $(0,\pm m),(\pm p,\pm m)$ with $m$ in $\{1,\ldots,p-1\}$.
  Moreover, for $\mu$ in $\{2,\ldots,p-2\}$ considered as a subset of the finite
  field $\mathbb F_p$, the elements $1+\mu$ and $1-\mu$ are invertible in $\mathbb F_p$. This implies that $\Lambda_\mu(p)$ has
  no elements of the form $\pm(m,m),\pm(m,-m)$ for $m$ in $\{1,\ldots,p-1\}$.
  The intersection of a (black or white) windmill cone with $[-p,p]^2$
  defines therefore a triangle of area $p^2/2>p/2$ whose boundary contains no
  lattice-points of $\Lambda_\mu(p)$ except for its three vertices.
  Lemma \ref{lemlatticebasis} implies now that
  every open (black or white) windmill cone contains a non-zero element $(x,y)$
  of $\Lambda_\mu(p)$
  with coordinates $x,y$ in $\{\pm 1,\pm 2,\ldots,\pm(p-1)\}$.
  
  Thus there exists a parallelogram $\mathcal P$ of minimal area with vertices
  $\pm e,\pm f$ in $\Lambda_\mu(p)\cap\{-p+1,\ldots,p-1\}^2$
  such that $\{\pm e,\pm f\}$ intersects either
  all four open black windmill cones
  or all four open white windmill cones.
  
  Suppose for simplicity that all elements of $\{\pm e,\pm f\}$
  are black (i.e. belong to open black windmill cones). (The case where
  both pairs  $\pm e$ and $\pm f$ are white is analogous.)

  Since $\Lambda_\mu(p)$
  intersects the diagonal $\mathbb R(1,1)$ and the antidiagonal $\mathbb R(1,-1)$
  in $\mathbb Z(p,p)$ and in $\mathbb Z(p,-p)$, and since
  $\Lambda_\mu(p)$ contains obviously no elements of the form $(\pm a,0),
  (0,\pm a)$ with $a$ in $\{1,\ldots,p-1\}$, all non-zero elements
  of $\mathcal P\cap \Lambda_\mu(p)$ belong to open windmill cones.
  Suppose that $\mathcal P\setminus\{\pm e,\pm f\}$ contains a non-zero
  element $g$ of $\Lambda_\mu(p)$.
  Area-minimality of $\mathcal P$ and the 
absence of non-zero elements in $\Lambda_\mu(p)\cap (\mathbb Z(1,1)\cup
\mathbb Z(1,-1))\cap\{-p+1,\ldots,p-1\}^2$
shows that 
  $g$ is contained in a white windmill cone (under the assumption
that $e$ and $f$ are black). After replacing $g$ by $-g$ if necessary,
  the element $g$ belongs either to the triangle with vertices
  $(0,0),e,f$ or to the triangle with vertices $(0,0),e,-f$.
  Lemma \ref{lemlatticebasis} applied  to the two sets $e,f$ and $e,-f$
generating the same sublattice $\mathbb Z e+\mathbb Z f$ of $\Lambda_\mu(p)$
  implies thus the existence of a non-zero element
  $h$ in $\mathcal P\cap \Lambda_\mu(p)$ such that
  $\{\pm g,\pm h\}$ intersects all four open white windmill cones.
  The parallelogram with vertices $\pm g,\pm h$ in all four open
  white windmill cones is therefore strictly included in $\mathcal P$
  in contradiction with area-minimality 
  of $\mathcal P$.

  After replacing each of $g$ and $f$ by its negative if necessary, we get
  that $e,f$ is a windmill basis of $\Lambda_\mu(p)$ by
  Lemma \ref{lemlatticebasis}.  
  \end{proof}

  \begin{lem}\label{lemuniquemin}
    Let $\Lambda$ be a lattice with two distinct windmill bases
    $e,f$ and $e,g$ sharing a common element $e$. Then $\Lambda$
    has a unique pair of minimal elements given by $\pm e$.
  \end{lem}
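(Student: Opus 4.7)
The plan is to convert the existence of two distinct windmill bases sharing $e$ into a strict lower bound on the distance from $f$ to the line $\mathbb{R}e$, exceeding the length of $e$; a slab argument then identifies $\pm e$ as the unique shortest nonzero vectors of $\Lambda$.

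Lemma \ref{lemonecolour} forces the two bases to share a colour, and by the symmetries of the windmill (a rotation by a multiple of $90^\circ$ and possibly a colour-swapping reflection) I may assume both are black with $e$ in the open E-NE cone; then $f$ and $g$ both lie in the open N-NW cone. Writing $e = (a,b)$ with $0 < b < a$ and $f = (c,d)$ with $c < 0 < d$ and $d > -c$, and expressing $g = \alpha e + \beta f$, Proposition \ref{propindex} forces $\beta = \pm 1$. A brief sign check on the cone position of $-f$ (which lies in the S-SE cone, and no translate $-f + k e$ by an integer multiple of $e$ enters the N-NW cone) rules out $\beta = -1$, so $g = f + \alpha e$ for some nonzero integer $\alpha$.

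Next I would examine where the line $\mathbb{R}e + f$ meets the N-NW cone. Parametrizing the line as $p(t) = f + te$ and intersecting with the bounding rays $x = 0$ and $x + y = 0$ of the cone yields the open interval $t \in \bigl(-(c+d)/(a+b),\; -c/a\bigr)$ of length $(ad - bc)/(a(a+b))$. Since this open interval contains the two distinct integers $t = 0$ and $t = \alpha$, its length strictly exceeds $1$, so $ad - bc > a(a+b) = a^2 + ab > a^2 + b^2 = \|e\|^2$, where the last inequality uses $a > b$. I expect this inequality to be the one substantive step of the argument; everything else is bookkeeping.

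Finally, the perpendicular distance from $f$ to $\mathbb{R}e$ equals $(ad - bc)/\|e\|$, which by the previous step exceeds $\|e\|$. Any $h \in \Lambda$ writes as $xe + yf$ with $x, y \in \mathbb{Z}$: if $y \neq 0$, the component of $h$ perpendicular to $\mathbb{R}e$ has length $|y|(ad-bc)/\|e\| > \|e\|$, forcing $\|h\| > \|e\|$; if $y = 0$ and $|x| \geq 2$, then $\|h\| \geq 2\|e\|$; and $h = \pm e$ realizes the length $\|e\|$. Hence $\pm e$ are the unique shortest nonzero vectors of $\Lambda$.
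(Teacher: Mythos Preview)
Your proof is correct and follows essentially the same strategy as the paper's: both show that $g=f+ke$ with $k\neq 0$, deduce that the segment of the line $f+\mathbb{R}e$ inside the cone $\mathcal{C}_f$ is long enough to force the perpendicular distance from the origin to that line to exceed $\|e\|$, and finish with the same slab argument. The only difference is cosmetic: you carry out the key inequality in explicit coordinates (obtaining $ad-bc>a^2+ab>a^2+b^2$), whereas the paper phrases the same comparison trigonometrically via the tangent addition formula $\tan\alpha+\tan(\pi/4-\alpha)<1$.
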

  
  \begin{proof} Since both ordered bases $e,f$ and $e,g$ start with $e$ and are windmill
    bases, they induce the same orientation and we have $g=f+ke$ for
    some non-zero integer $k$ in $\mathbb Z$.
    The affine line $\mathcal L=f+\mathbb R e$ intersects therefore the
    open windmill cone $\mathcal C_f$ containing $f$ and $g$ in an open segment
    of length $l>\sqrt{\langle e,e\rangle}$. Denoting by $d$ the distance
      of $\mathcal L$ to the origin $(0,0)$ and by $\alpha$
      the angle in $(0,\pi/4)$ between the normal line $(\mathbb R e)^\perp$
      (with $(\mathbb R e)^\perp\setminus\{(0,0)\}$ contained in
      $\mathcal C_f\cup(-\mathcal C_f)$) of $\mathcal L$
      and a boundary line of $\mathcal C_f$ (separating $\mathcal C_f$
      from a windmill cone of the opposite colour) we have
the inequalities
\begin{align*}
  \sqrt{\langle e,e\rangle}&<l\\
  &=d\left(\tan\alpha+\tan(\pi/4-\alpha)\right)\\
   &=(1-\tan\alpha\tan(\pi/4-\alpha))d\tan(\pi/4)\\
   &<d
\end{align*}
where we have used the addition formula $\tan(x+y)=\frac{\tan x+\tan y}
{1-\tan x\tan y}$ of the tangent function.
%\footnote{Our proof is a bit of an overkill: The inequality $l<d$ follows also from elementary geometric arguments.}.

The open strip delimited by the two parallel affine lines $\mathcal L$ and
$-\mathcal L$ and consisting of all points at distance $<d$ from $\mathbb Re$
intersects $\Lambda$ in $\mathbb Ze$.
All elements of $\Lambda \setminus \mathbb Z e$ are therefore
at least at distance $d>\sqrt{\langle e,e\rangle}$ from the origin.
This shows that $\pm e$ is the unique pair of minimal vectors in $\Lambda$.
  \end{proof}
  
  \begin{lem}\label{lemnodisj}
    Two windmill bases of a lattice are never disjoint.
  \end{lem}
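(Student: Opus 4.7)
The plan is an argument by contradiction. Suppose the two windmill bases $\{e_1,e_2\}$ and $\{f_1,f_2\}$ of a lattice $\Lambda$ are disjoint. Lemma \ref{lemonecolour} forces them to share the same colour; without loss of generality both are black, and after possibly relabeling the elements of each pair we may arrange $e_1,f_1$ in the open E-NE windmill cone and $e_2,f_2$ in the open N-NW windmill cone. Basis vectors of a lattice are primitive, and both cones lie in the open upper half-plane, so disjointness of the two bases forces the four projective classes of $e_1,f_1,e_2,f_2$ to be pairwise distinct on the real projective line.

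I would then split on the cyclic order of these four points. If $\{e_1,e_2\}$ and $\{f_1,f_2\}$ interlace, Lemma \ref{lemnoncrossing} yields the contradiction immediately. Otherwise one pair encloses the other on the projective line, and by the symmetry of the hypotheses in the two bases I may assume $\arg e_1<\arg f_1<\arg f_2<\arg e_2$. This is equivalent to asserting that $f_1$ and $f_2$ both lie in the open positive cone spanned by $e_1$ and $e_2$; hence there are positive integers $\alpha_i,\beta_i$ with $f_i=\alpha_i e_1+\beta_i e_2$ for $i=1,2$, and Proposition \ref{propindex} forces $|\alpha_1\beta_2-\alpha_2\beta_1|=1$.

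The main step, which I expect to be the principal obstacle, is turning the windmill cone constraints in this non-interlaced case into a numerical contradiction. The plan is to write $e_1=(x_1,y_1)$ with $x_1>y_1>0$ and $e_2=(x_2,y_2)$ with $y_2>-x_2>0$, and to exploit the strict inequality $y_2+|x_2|>2|x_2|$ (with $|x_2|=-x_2$). The condition that $f_1$ lies in the E-NE cone reads $\alpha_1(x_1-y_1)>\beta_1(y_2+|x_2|)$, which immediately gives $\alpha_1(x_1-y_1)>2\beta_1|x_2|$. The negativity of the first coordinate of $f_2$, imposed by its membership in the N-NW cone, gives $\beta_2|x_2|>\alpha_2 x_1$. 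Chaining these two inequalities through $|x_2|$ and using $y_1>0$ (so $x_1-y_1<x_1$) yields $\alpha_1\beta_2>2\alpha_2\beta_1$, which combined with $|\alpha_1\beta_2-\alpha_2\beta_1|=1$ and $\alpha_i,\beta_i\geq 1$ forces $\alpha_2\beta_1<1$, the desired contradiction.
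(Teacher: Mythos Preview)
Your argument is correct. The reduction to the non-interlaced case is the same as the paper's, but from there the two proofs diverge. The paper argues geometrically: with $g,h$ (your $f_1,f_2$) inside the cone spanned by $e,f$ (your $e_1,e_2$), it locates $e+f$ in one of the sub-cones bounded by $g$ or $h$, observes that the affine line $e+\mathbb{R}f$ meets the white N-NE cone in a longer segment than the black E-NE cone (because its slope is steeper than $-1$), and so produces some $e+kf$ in the white cone; the pair $f,\,e+kf$ is then interlaced with $g,h$, contradicting Lemma~\ref{lemnoncrossing}.

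Your route is purely algebraic: once $f_i=\alpha_i e_1+\beta_i e_2$ with $\alpha_i,\beta_i\in\mathbb{Z}_{\ge 1}$ and $\lvert\alpha_1\beta_2-\alpha_2\beta_1\rvert=1$, the cone inequalities $\alpha_1(x_1-y_1)>\beta_1(y_2+|x_2|)>2\beta_1|x_2|$ and $\beta_2|x_2|>\alpha_2 x_1>\alpha_2(x_1-y_1)$ chain to $\alpha_1\beta_2>2\alpha_2\beta_1$, which is incompatible with the determinant condition. This is shorter and avoids the somewhat delicate affine-line length comparison; the paper's version, on the other hand, makes visible the geometric mechanism (the white cone is ``wider'' along that line) and reuses Lemma~\ref{lemnoncrossing} a second time rather than computing in coordinates. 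Both are valid; yours is the more economical of the two.
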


  \begin{proof} Up to replacing a lattice $\Lambda$ having
    two disjoint windmill bases
    by its orthogonal reflection $\sigma(\Lambda)$
    with respect to the vertical line $x=0$, we can assume otherwise that
    $\Lambda$ has
    two disjoint black windmill bases (see Lemma \ref{lemonecolour})
    given
    by $e,f$ and $g,h$ with $e,g$ in the open black E-NE windmill cone
    and $f,h$ in the open black N-NW windmill cone.
    Since the two bases are not interlaced by 
    Lemma \ref{lemnoncrossing}, we can moreover assume that $g$ and $h$ belong
    both to the open cone spanned by $e,f$.
     If the sum $e+f$ belongs to the open 
cone spanned by $g$ and $h$ we get two interlaced bases $e,e+f$ and 
$g,h$ in contradiction with Lemma \ref{lemnoncrossing}.

The non-zero lattice 
element $e+f$ belongs therefore either to the closed cone spanned by $e,g$ 
or to the closed cone spanned by $f,h$. In the first case ($e+f$
in the closed cone spanned by $e,g$) the element
$e+f$ belongs to the open black E-NE windmill cone containing $f$ and $g$,
see Figure \ref{figpreuveI} for a schematic illustration.
(We discuss here only the first case.
The second case where $e+f$ belongs to the closed cone spanned by $f$
and $h$ reduces to the first cases after a quarter-turn of the lattice
$\Lambda$ followed by obvious relabellings and sign-changes.
It can also be treated by an easy adaption of the
arguments used in the first case.) 
\begin{figure}[h]
  \epsfysize=5.4cm
  \center{\epsfbox{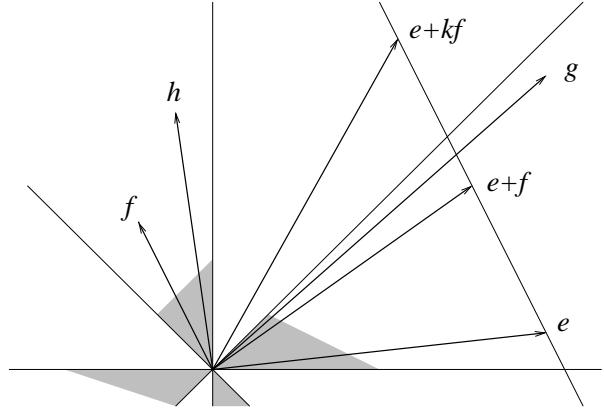}}
\caption{A schematic figure with $e+f$ in the E-NE windmill cone.}\label{figpreuveI}
\end{figure}
Since the affine line $\mathcal L=e+\mathbb R f$
has a downward slope strictly steeper than $-1$,
the intersection 
of $\mathcal L$ with the open white N-NE windmill cone is strictly longer
than the intersection of $\mathcal L$ with the open black E-NE windmill cone.
Since the intersection of $\mathcal L$ with the open black E-NE windmill
cone contains at least the two elements $e$ and $e+f$ of $\Lambda$
there exists a natural integer $k>1$ such that the element $e+kf$ of 
$\Lambda$ belongs to the open white N-NE windmill cone.
This leads to two interlaced bases $f,e+kf$ and $g,h$ in contradiction 
with Lemma \ref{lemnoncrossing}.
    \end{proof}

  \begin{prop}\label{propcommonelt}
    The following assertions hold if a lattice
    $\Lambda$ has at least two windmill bases:

    \begin{itemize}
    \item{} $\Lambda$ has a unique pair $\pm m$ of minimal vectors
      with $m$ contained in all windmill bases of $\Lambda$.

    \item{} $\Lambda$ has only finitely many windmill bases.
    More precisely, there exists a windmill basis $m,f$ of $\Lambda$ such that
      all windmill bases of $\Lambda$ are given by $m,f+sm$ for
      $s$ in $\{0,1,\ldots,k-1\}$ where $k$ is the number of windmill
      bases of $\Lambda$.
    \end{itemize}    
  \end{prop}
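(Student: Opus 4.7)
The plan is to derive the first bullet from Lemmas \ref{lemnodisj} and \ref{lemuniquemin}, and then to parametrize the windmill bases of $\Lambda$ by integer translates along the common minimal vector.

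First, fix two distinct windmill bases $B_1, B_2$ of $\Lambda$. Lemma \ref{lemnodisj} gives a common element $m \in B_1 \cap B_2$, and Lemma \ref{lemuniquemin} applied to the pair $B_1, B_2$ shows that $\pm m$ is the unique pair of minimal vectors of $\Lambda$. To see that $m$ lies in every windmill basis of $\Lambda$, let $B \neq B_1$ be another windmill basis. By Lemma \ref{lemnodisj}, $B \cap B_1 \neq \emptyset$. If the common element were the second element $f_1$ of $B_1$ rather than $m$, then Lemma \ref{lemuniquemin} applied to $B$ and $B_1$ would force $\pm f_1$ to be the unique minimal pair of $\Lambda$, whence $f_1 = \pm m$, contradicting the linear independence of $m$ and $f_1$. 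Hence $m \in B$.

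For the second bullet, all windmill bases of $\Lambda$ share the same colour by Lemma \ref{lemonecolour}; I treat the black case, the white case being identical. Replacing $m$ by $-m$ if necessary, I may assume $m$ lies in the E-NE cone, so that the second element of every windmill basis lies in the N-NW cone. If $\{m, f\}$ and $\{m, f'\}$ are two windmill bases, then both generate $\Lambda$, hence $f' = \varepsilon f + s m$ for some $\varepsilon \in \{\pm 1\}$ and $s \in \mathbb{Z}$. The case $\varepsilon = -1$ is ruled out: it would give $sm = f + f'$, whose right-hand side lies in the open N-NW cone (which is convex and hence closed under addition), while $sm$ lies on the line $\mathbb{R} m$, contained in the union of the E-NE and W-SW cones with the origin---a set disjoint from the open N-NW cone.

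Thus every windmill basis of $\Lambda$ takes the form $\{m, f + sm\}$ for some $s \in \mathbb{Z}$, where $\{m, f\}$ is any fixed windmill basis. The admissible shifts $s$ are those for which $f + sm$ lies in the N-NW cone, and they form a bounded set of consecutive integers: the line $f + \mathbb{R} m$ meets the open convex N-NW cone in a bounded segment, since its direction $m$ belongs to E-NE and is disjoint from the closure of N-NW. Replacing $f$ by $f + s_0 m$ for $s_0$ the smallest admissible shift puts the parameters into the form $\{0, 1, \ldots, k-1\}$, with $k$ the total number of windmill bases, completing the argument. The most delicate step I anticipate is the sign argument ruling out $\varepsilon = -1$; everything else is bookkeeping with the lemmas already established.
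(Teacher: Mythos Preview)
Your proof is correct and follows essentially the same route as the paper's: both deduce the first bullet from Lemmas \ref{lemnodisj} and \ref{lemuniquemin}, and both obtain the second bullet by observing that the second element of any windmill basis lies on the affine line $f+\mathbb{R}m$ inside the perpendicular windmill cone, which it meets in a bounded segment. Your treatment is in fact slightly more explicit than the paper's in two places: you spell out why $m$ (rather than the other element of $B_1$) must be the common vector with any further basis $B$, and you give a clean convexity argument to exclude $\varepsilon=-1$, whereas the paper absorbs this into the unjustified-looking claim that $g\in f+\mathbb{R}m$ (which is true because the parallel line $-f+\mathbb{R}m$ misses the cone $\mathcal{C}_m^\perp$ entirely).

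One small slip: writing ``replacing $m$ by $-m$ if necessary, I may assume $m$ lies in the E-NE cone'' does not do what you want, since if $m$ lies in the N-NW cone then $-m$ lies in S-SE, not E-NE. What you need is simply that $m$ lies in one of the two upper black cones and the second basis vector lies in the other; your convexity and boundedness arguments go through verbatim with the roles of the two cones interchanged, so this is a phrasing issue rather than a gap.
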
  

  Minimal vectors do not necessarily intersect the set
  $\{e,f\}$ in the case of a lattice $\Lambda$ with a unique windmill basis
  $e,f$.
  
  Both black windmill bases $(-1,2),(5,3)$ and $(-1,2),(6,1)$ of our
  running example $\Lambda$ defined by
  (\ref{runningexple}) contain the minimal element $m=(-1,2)$ of $\Lambda$.
  The running example gives $f=(6,1)$ and $k=2$ in the last assertion.

  \begin{proof}[Proof of Proposition \ref{propcommonelt}]
    Lemma \ref{lemnodisj} shows that two windmill bases of
    $\Lambda$ intersect in a common element
    $m$ defining the unique pair $\pm m$ of  minimal vectors of $\Lambda$
    by Lemma \ref{lemuniquemin}.
    Thus all windmill bases of $\Lambda$ are all pairs $m,g$
    with $g$ in the set $(f+\mathbb R m)\cap \mathcal C_m^\perp\cap\Lambda$
    where $m,f$ is an arbitrary  windmill basis and where
    $\mathcal C_m^\perp$ is the open windmill cone containing $f$
    perpendicular to the open windmill cone $\mathcal C_m$
    defined by $m$.

Since the vector $m$ does not
belong to $\mathcal C_m^\perp$ the intersection
of the affine line $\mathcal L=f+\mathbb R m$
    with $\mathcal C_m^\perp$ is an open interval of bounded length and
    the set $\mathcal G=\mathcal L\cap \mathcal C_m^\perp\cap\Lambda$
    is finite. We replace now $f$ by the element $g$ of $\mathcal G$ minimizing the
    scalar product with $m$ in order to get the result.
\end{proof}
    
We call a black windmill basis $u,v$ of a lattice $\Lambda_\mu(p)$
(with $\mu$ in $\{2,\ldots,p-2\}$) \emph{standard} if $u=(a,c),v=(-d,b)$
with $a,b,c,d\in\mathbb N$ such that $\min(a,b)>\max(c,d)$.

The basis $u=(6,1),v=(-1,2)$ of our running example (\ref{runningexple})
is a standard black windmill basis. The inequality $3\geq 2$ implies
that its second black windmill basis
$(5,3),(-1,2)$ is not standard.

\begin{prop}\label{propmonochr} Given an odd prime number $p$, a lattice $\Lambda_\mu(p)$ 
with $\mu$ in $\{2,\ldots,p-2\}$ has either only white windmill bases
or it has a unique standard black windmill basis.
\end{prop}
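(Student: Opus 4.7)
My plan is as follows. By Lemma \ref{lemonecolour} all windmill bases of $\Lambda_\mu(p)$ share a single colour, so I may assume the set $B$ of black windmill bases is non-empty; the task reduces to showing that exactly one element of $B$ is standard.

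I first dispose of the case $|B|=1$ with unique basis $u=(a,c)\in$ E-NE and $v=(-d,b)\in$ N-NW. Primality of $p=ab+cd$ excludes $a=d$ and $b=c$ (each would factor $p$ as a product of two integers $\geq 2$). If the basis were not standard, then either $a<d$ or $b<c$. In the first case a direct check shows $v+u=(a-d,b+c)$ lies in N-NW (the inequality $b+c>d-a$ follows from $b>d$), making $(u,v+u)$ a second element of $B$. In the second case necessarily $a>d$, and $u-v=(a+d,c-b)$ then lies in E-NE (from $0<c-b<c<a<a+d$), making $(u-v,v)$ a second element. Either alternative contradicts $|B|=1$, hence the unique basis is automatically standard.

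Now assume $|B|\geq 2$. Proposition \ref{propcommonelt} provides a vector $m$ common to every basis in $B$, with $\pm m$ the unique shortest pair of $\Lambda_\mu(p)$. The quarter-turn $(x,y)\mapsto(y,-x)$ preserves colours and standardness, and carries $\Lambda_\mu(p)$ to the same-type lattice $\Lambda_{-\mu^{-1}}(p)$; after at most one such rotation and possibly replacing $m$ by $-m$, I may assume $m=(a,c)\in$ E-NE, i.e.\ $a>c>0$. The bases in $B$ then take the form $(m,v_s)$ with $v_s=v_0+sm=(-d_s,b_s)$ for $s$ in a finite integer interval $[s_-,s_+]$. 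Since consecutive $d_s$ differ by $a$ and standardness requires $d_s\in\{1,\dots,a-1\}$, at most one value of $s$ can give a standard basis, which takes care of uniqueness.

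For existence I take the extremal basis at $s=s_+$. Maximality of $s_+$ forces $d_{s_+}\leq a$: if $d_{s_+}>a$, then $v_{s_+ +1}=(-d_{s_+}+a,\,b_{s_+}+c)$ still lies in N-NW (use $b_{s_+}>d_{s_+}$), contradicting maximality. The boundary case $d_{s_+}=a$ puts $v_{s_+ +1}=(0,b_{s_+}+c)$ on the $y$-axis of $\Lambda_\mu(p)$, whence $a(b_{s_+}+c)=ab_{s_+}+cd_{s_+}=p$ with $a\geq 2$ and $b_{s_+}+c\geq 2$, contradicting primality. Hence $d_{s_+}\in\{1,\dots,a-1\}$ and $a>d_{s_+}$. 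Lemma \ref{lemuniquemin} then gives the strict inequality $d_{s_+}^2+b_{s_+}^2=|v_{s_+}|^2>|m|^2=a^2+c^2$, and since $d_{s_+}<a$ this forces $b_{s_+}^2>c^2$, hence $b_{s_+}>c$. Thus $(m,v_{s_+})$ is standard. The main obstacle I anticipate is the exclusion of the boundary case $d_{s_+}=a$; this is the only point where primality and the axis-structure of $\Lambda_\mu(p)$ enter, the remainder of the argument being pure cone geometry combined with the strict minimality of $m$.
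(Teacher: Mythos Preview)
Your argument is correct, and it reaches the same conclusion as the paper's proof by a somewhat different route. The paper does not split on $|B|$: it uniformly selects $u$ as the \emph{lowest} windmill-type vector in the E-NE cone and $v$ as the \emph{rightmost} one in the N-NW cone, then shows directly that this extremal pair is standard (the shifts $u-v$ and $v+u$ fall outside their cones by extremality, and the boundary cases $b=c$, $a=d$ are killed by primality). Your $|B|=1$ case is essentially the contrapositive of this same shift argument, but your $|B|\ge 2$ case is organised differently: you invoke the quarter-turn symmetry to force the common minimal vector $m$ into the E-NE cone, then exploit the arithmetic progression of the $d_s$ to locate the unique standard index at $s=s_+$, and finally use the \emph{norm} inequality $\lvert v_{s_+}\rvert^2>\lvert m\rvert^2$ (from the strict minimality of $m$) to obtain $b_{s_+}>c$. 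The paper never needs the quarter-turn reduction or the norm comparison; on the other hand, your use of minimality makes the inequality $b_{s_+}>c$ drop out in one line, whereas the paper's argument for $b>c$ goes through the slightly more delicate observation that the basis $(p,0),(-d,b)$ forces $b=1$. Both approaches lean on primality only at the boundary cases $a=d$ or $b=c$, exactly as you anticipated. One small remark: when you cite Lemma~\ref{lemuniquemin} for the strict inequality $\lvert v_{s_+}\rvert>\lvert m\rvert$, Proposition~\ref{propcommonelt} (which already records that $\pm m$ is the \emph{unique} shortest pair) is the cleaner reference.
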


\begin{proof} Proposition \ref{propexistencebasis} shows that
  such a lattice $\Lambda=\Lambda_\mu(p)$ has windmill bases.
  They are all of the same colour
  by Lemma \ref{lemonecolour}. We assume now that all windmill bases
  of $\Lambda$ are black. A vector $w$ in $\Lambda$ is of \emph{windmill type}
  if there exists a windmill basis containing $w$.

  We denote by $u=(a,c)\in\Lambda$ the lowest vector of windmill type in
  the open black E-NE windmill cone and we denote by $v=(-d,b)\in\Lambda$
  the rightmost vector of windmill type in the open black N-NE windmill cone.
  The vectors $u,v$ form a black windmill basis of $\Lambda$: This is obvious
  if $\Lambda$ has a unique windmill basis and it follows from the
  description of all windmill bases given by
  Proposition \ref{propcommonelt} otherwise.

  For our running example (\ref{runningexple})  we get $u=(6,1)$ and $v=(-1,2)$.
%A second example is obtained by turning the lattice of our running example by $\pi/2$ (by a quarter-turn): This yields the lattice $\Lambda_{11}(13)=\mathbb Z(2,1)+\mathbb Z(-3,5)$ with $k=2$,
%  $e=(2,1)$, $f(-3,5)$ (represented by turning Figure \ref{figrunexple} counterclockwise
%  by 90 degrees and changing signs of the new horizontal coordinate axis). This example leads to $u=(2,1)=e$ and $v=(-1,6)=f+e$. 
 
 We claim that $u,v$ is a standard black windmill basis of $\Lambda$:
 We have $a>c$ since $u=(a,c)$ belongs to the open black E-NE windmill cone
 and $b>d$ since $v=(-d,b)$ belongs to the open black N-NW windmill cone.

 Since $u-v=(a+d,c-b)$ is lower than $u$, the basis $u-v=(a+d,c-b),v=(-d,b)$
 of $\Lambda$ is not a windmill basis and we have therefore
 $b\geq c$.
 If $b=c$, the vectors $u-v=(a+d,0),v=(-d,b)$ are a basis of $\Lambda$.
 Since $\Lambda$ intersects $\mathbb Z(1,0)$ in $\mathbb Z(p,0)$
 we have $a+d=p$ which implies $u-v=(p,0)$.
 Since $u-v=(p,0)$ and $v=(-d,b)$ is a basis of $\Lambda$
 we get $b=1$ in contradiction with the inequalities $1\leq d<b$.
 We have thus $b>c$.

 Similarly, since $v+u=(a-d,b+c)$ is at the right of $v$, the basis
 $u=(a,c),v+u=(a-d,b+c)$ of $\Lambda$ is not a windmill basis and
 we have $a\geq d$.
 If $a=d$, the vectors $u=(a,c),v+u=(0,b+c)$ are a basis of $\Lambda$.
 This implies $b+c=p$ and $a=1$ contradicting the inequalities $1\leq c<a$.
 This shows $a>d$.

 Unicity follows easily from the description of
 all windmill bases given by the last assertion of
 Proposition \ref{propcommonelt}.
 \end{proof}

%==================================================

\begin{proof}[Proof of Theorem \ref{thmmain}]
  Given an odd prime number $p$, we denote by
  $\mathcal S_p$ the set of all
  solutions $(a,b,c,d)$ as
  defined by Theorem \ref{thmmain}.

  We associate to a solution $(a,b,c,d)$ in $\mathcal S_p$
  the two vectors $u=(a,c),\ v=(-d,b)$
and we consider the sublattice
$\Lambda=\mathbb Zu+\mathbb Zv$ of index $p=ab-c(-d)$ in $\mathbb Z^2$ generated by $u$ and $v$.
Since $p$ is prime, there are exactly two solutions with $cd=0$, given by
$(p,1,0,0)$ and $(1,p,0,0)$ corresponding to the
lattices $\mathbb Z(p,0)+\mathbb Z(0,1)$ and
$\mathbb Z(1,0)+\mathbb Z(0,p)$.

We suppose henceforth $cd>0$.
The vectors $u$ and $v$ are then contained respectively in
the open black E-NE and in the open
black N-NW windmill cone and form therefore a standard black
windmill basis of the lattice $\Lambda$.

Sub-lattices of prime-index $p$ in $\mathbb Z^2$
are in bijection with the set of all $p+1$ points on
the projective line $\mathbb P^1\mathbb F_p$ over the finite
field $\mathbb F_p$, cf. Proposition \ref{propsubl}.
More precisely, a point $[a:b]$ of the projective line defines the
lattice
$$\Lambda_{[a:b]}=\{(x,y)\in\mathbb Z^2\ \vert\ ax+by\equiv 0\pmod p\}$$
which is equal to the lattice $\Lambda_\mu(p)$ defined by
(\ref{deflambdamu})  for $\mu\equiv b/a
\pmod p$ using obvious conventions.
We have already considered lattices associated to the two solutions
with $cd=0$. By Proposition \ref{propnobasis}, the two lattices given by
$\mu\equiv\pm 1\pmod p$
have no windmill basis and yield thus no solutions.
All $(p-3)$ lattices $\Lambda_\mu(p)$ with $\mu\in\{2,\ldots,p-2\}$
have windmill bases by Proposition \ref{propexistencebasis}.

Since $\Lambda_\mu(p)$ and $\Lambda_{p-\mu}(p)$
  (respectively $\Lambda_{\mu^{-1}\pmod p}(p)$) differ by a horizontal
(respectively diagonal) reflection,
they have windmill bases of different colours.
Proposition \ref{propmonochr} shows that exactly
$(p-3)/2$ values of $\mu$ in $\{2,\ldots,p-2\}$
correspond to lattices $\Lambda_\mu(p)$ with unique standard black
windmill bases. These $(p-3)/2$ lattice are therefore
in one-to-one correspondence  
with solutions in $(a,b,c,d)$ in $\mathcal S_p$ such that $cd>0$. Taking into
account the two degenerate solutions $p\cdot 1+0\cdot 0$ and
$1\cdot p+0\cdot 0$, we get a total number of $(p-3)/2+2=(p+1)/2$
solutions in $\mathcal S_p$.
\end{proof}

%\begin{rem} The lattice $\Lambda=\mathbb Z(a,c)+\mathbb Z(-d,b)$
%  associated to a solution $(a,b,c,d)$ in $\mathcal S_p$
%  has a fundamental domain given by the union
%  of the rectangle of size $a\times b$ with vertices $(0,0),
%  (a,0),(a,b),(0,b)$ and of the rectangle of size $d\times c$
%  with vertices $(a,0),(a+d,0),(a+d,c),(a,c)$.
%\end{rem}

\section{Complement: Voronoi cells and windmill bases}

A lattice $\Lambda$ induces a \emph{Voronoi} diagram tiling the plane
$\mathbb R^2$ with \emph{Voronoi cell} bounded by points of
$\mathbb R^2$ having more than a unique closest lattice point.
Points at locally maximal distance to $\Lambda$
are vertices of the Voronoi diagram for $\Lambda$, see for example
the monograph \cite{CS} for more on Voronoi cells of lattices and
sphere packings.

We denote by $\mathcal C_V$ the Voronoi cell consisting of all points
of $\mathbb R^2$ with closest lattice-point
the trivial element $(0,0)$ of $\Lambda$. 
The plane $\mathbb R^2$ is tiled by $\Lambda$-translates of
$\mathcal C_V$ which is a rectangle if $\Lambda$ has a reduced basis of two
orthogonal vectors and which is a centrally symmetric
hexagon otherwise. A reduced basis $e,f$ defines
normal vectors to two pairs of parallel
sides of $\mathcal C_V$. A normal vector $g$
to the remaining pair of sides in the hexagonal case is given by
$e-\epsilon f$
where $\epsilon=\frac{\langle e,f\rangle}{\vert\langle e,f\rangle\vert}$
is the sign of the scalar product $\langle e,f\rangle$ between
$e$ and $f$. We call the set $\{\pm e,\pm f\}$, respectively $\{\pm e,
\pm f,\pm g\}$ of primitive lattice elements normal to sides of $\mathcal C_V$
the set of \emph{Voronoi vectors}. The Voronoi cell $\mathcal C_V$ is defined
by the inequalities $2\langle x,v\rangle\leq \langle v,v\rangle$ for
all elements
$v$ of the set $\mathcal V$ of Voronoi vectors. Two
linearly independent Voronoi vectors $u,v$ in $\mathcal V$ generate $\Lambda$.

We illustrate these notions on our running example
$\Lambda=\mathbb Z(-1,2)+\mathbb Z(6,1)$ defined by 
(\ref{figrunexple}): A reduced basis for $\Lambda$ is given by the
minimal vector $e=(-1,2)$ and by $f=(5,3)$. Voronoi vectors are given by
$\{\pm e,\pm f,\pm g\}$ for $g=f-e=(6,1)$. The lattice $\Lambda$ has
two black windmill bases given by $\{e,f\}$ and $\{e,g\}$. The last basis
$\{e,g\}$ is the standard basis of $\Lambda$. The Voronoi cell $\mathcal C_V$
of $\Lambda$ is the hexagon with vertices $\pm(53/26,59/26),\pm (77/26,19/26),
\pm(79/26,7/26)$.

Voronoi vectors are related to windmill bases by the following result:

\begin{prop}\label{propVoron} Every lattice $\Lambda$ in $\mathbb R^2$ with windmill bases
has a windmill basis contained in its set of Voronoi vectors.
\end{prop}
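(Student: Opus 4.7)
The plan is to take the unique minimal vector $m$ of $\Lambda$ as one of the two vectors of the desired windmill basis. By Proposition \ref{propcommonelt}, $m$ belongs to every windmill basis of $\Lambda$, and as a shortest non-zero lattice vector it sits in any Gau\ss ian reduced basis and is therefore a Voronoi vector. After choosing $m$ in the open upper half-plane, it lies in some windmill cone $\mathcal C_m$; it then suffices to produce a Voronoi vector $v$ of $\Lambda$ in the open windmill cone $\mathcal C_m^\perp$ (the cone of the same colour as $\mathcal C_m$, perpendicular to it and in the upper half-plane) such that $\{m, v\}$ is a basis of $\Lambda$.

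For any windmill basis $\{m, v\}$, Lemma \ref{lemlatticebasis} forces $v$ to lie on the line $\ell$ closest to $\mathbb R m$, parallel to it and hitting $\Lambda$, on the $\mathcal C_m^\perp$ side of $\mathbb R m$; indeed, no lattice point may lie strictly between $\mathbb R m$ and $\ell$. The orthogonal projection $p$ of the origin onto $\ell$ lies in $\mathcal C_m^\perp$, since it sits on the ray from $0$ perpendicular to $m$ pointing into $\mathcal C_m^\perp$. The Voronoi vectors of $\Lambda$ lying on $\ell$ are precisely the lattice point(s) of $\ell$ closest to $p$: either the single point $f = p$ in the rectangular case (reduced basis $m, f$ with $m \perp f$), or two consecutive lattice points $f$ and $f - \epsilon m$ bracketing $p$ at distance $|m|$ apart along $\ell$ in the hexagonal case, with $\epsilon$ the sign of $\langle m, f \rangle$.

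The rectangular case is immediate, as $p = f$ lies in $\mathcal C_m^\perp$. In the hexagonal case I would argue by contradiction: if neither $f$ nor $f - \epsilon m$ belonged to $\mathcal C_m^\perp$, then the non-empty open segment $\mathcal C_m^\perp \cap \ell$, which contains $p$, would be sandwiched strictly between two consecutive lattice points of $\Lambda$ on $\ell$ and hence contain no lattice point of $\Lambda$ at all, contradicting the existence of a windmill basis whose second vector must lie in $\mathcal C_m^\perp \cap \ell \cap \Lambda$. The main subtlety I expect is the geometric bookkeeping verifying that the two Voronoi vectors on $\ell$ really are \emph{consecutive} lattice points of $\ell$, so that the gap between them contains no further lattice point of $\Lambda$; everything else follows from Lemma \ref{lemlatticebasis} and Proposition \ref{propcommonelt}.
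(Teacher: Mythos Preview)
Your argument has a genuine gap at its very first step. You invoke Proposition~\ref{propcommonelt} to conclude that the minimal vector $m$ lies in every windmill basis of $\Lambda$, but that proposition applies only when $\Lambda$ has \emph{at least two} windmill bases. The paper explicitly warns, right after the statement of Proposition~\ref{propcommonelt}, that ``minimal vectors do not necessarily intersect the set $\{e,f\}$ in the case of a lattice $\Lambda$ with a unique windmill basis $e,f$.'' In that situation your whole plan collapses: $m$ need not lie in any open windmill cone (so $\mathcal C_m$ and $\mathcal C_m^\perp$ may not even be defined), and even if it does, there is no reason for $\ell\cap\mathcal C_m^\perp$ to contain a lattice point, so your contradiction in the hexagonal case never materialises. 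You would then have to show directly that the \emph{given} unique windmill basis consists of Voronoi vectors, and nothing in your outline addresses this. A smaller but related issue: you speak of ``the unique minimal vector $m$'', yet uniqueness of $\pm m$ is again a conclusion of Proposition~\ref{propcommonelt}, not a standing hypothesis.

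The paper's route is quite different and avoids this pitfall. It first proves Lemma~\ref{lemgenpos}: whenever no Voronoi vector lies on a boundary line separating windmill cones, an angle argument (the minimal Voronoi vector $e$ makes an angle $>\pi/4$ with each of the other Voronoi directions) forces two Voronoi vectors to lie in perpendicular cones of the same colour. This lemma needs no assumption on the number of windmill bases. The degenerate case, where some Voronoi vector sits on a boundary line, is then handled by a small rotation $\rho$ that pushes all Voronoi vectors into open cones; Lemma~\ref{lemgenpos} now produces a windmill basis inside $\rho(\mathcal V)$, and Proposition~\ref{propcommonelt} is invoked only at this point---legitimately, because $\rho(\Lambda)$ now has at least two windmill bases (the rotated original one and the new one in $\rho(\mathcal V)$)---to transport the conclusion back to $\Lambda$.
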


Proposition \ref{propVoron} (together with Proposition \ref{propcommonelt}
and Gau\ss ian lattice reduction) gives a fast algorithm (using $O(\log p)$
operations on integers not exceeding $p$) for
computing the solution of $\mathcal S_p$
associated to $\Lambda_{\pm \mu}(p)$ for $\mu$ in $\mathbb F_p\setminus\{0,\pm 1\}$:
Compute a reduced basis of $\Lambda_\mu(p)$ and use it to construct the associated set $\mathcal V$ of Voronoi vectors
which contains a windmill basis by Proposition
\ref{propexistencebasis} and Proposition \ref{propVoron}. If
the windmill basis is white, replace $\Lambda_{\mu}(p)$ by $\Lambda_{-\mu}(p)$
(using for example the vertical reflection
$\sigma(x,y)=(-x,y)$ of $\mathbb R^2$) in order to get a
black windmill basis of $\Lambda_{-\mu}(p)$.
Use now Proposition \ref{propcommonelt}
for constructing the unique standard basis $(a,c),(-d,b)$
encoding the solution $p=ab+cd$ of $\mathcal S_p$.

The main tool for proving Proposition \ref{propVoron} is the following 
result which is perhaps of independent interest:

\begin{lem}\label{lemgenpos} Assume that the
  set $\mathcal V$ of Voronoi vectors of a lattice $\Lambda$
  does not intersect the set of boundary lines separating black and white 
windmill cones. Then $\Lambda$ has a windmill basis contained in $\mathcal V$.
\end{lem}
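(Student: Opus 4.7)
The plan is to distinguish two cases according to the shape of the Voronoi cell $\mathcal{C}_V$.

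In the \emph{rectangular case} we have $\mathcal V=\{\pm e,\pm f\}$ with $\langle e,f\rangle=0$, so the pair-directions of $e$ and $f$ (angles modulo $\pi$) differ by exactly $\pi/2$. By hypothesis each pair-direction avoids $\{0,\pi/4,\pi/2,3\pi/4\}$ and hence lies in one of the four open intervals $I_1=(0,\pi/4)$, $I_2=(\pi/4,\pi/2)$, $I_3=(\pi/2,3\pi/4)$, $I_4=(3\pi/4,\pi)$. The shift by $\pi/2$ exchanges $I_1\leftrightarrow I_3$ and $I_2\leftrightarrow I_4$, so after choosing signs that place both vectors in the open upper half-plane we obtain a (black or white) windmill basis of $\Lambda$.

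In the \emph{hexagonal case}, $\mathcal V=\{\pm e,\pm f,\pm g\}$ with $g=e-\epsilon f$, and any two non-parallel elements of $\mathcal V$ already form a basis of $\Lambda$. A windmill basis exists inside $\mathcal V$ if and only if two of the three pair-directions lie in the opposite intervals $I_1,I_3$ or in $I_2,I_4$; failure would force the three pair-directions into one of the four open arcs $I_1\cup I_2$, $I_2\cup I_3$, $I_3\cup I_4$, $I_4\cup I_1$, each of length $\pi/2$ on the circle $\mathbb R/\pi\mathbb Z$ of pair-directions. The crux is therefore the following angular sub-claim: \emph{the three pair-directions of Voronoi vectors in the hexagonal case cannot all lie in an open arc of length $\le \pi/2$}. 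Equivalently, each of the three gaps between consecutive pair-directions (which together sum to $\pi$) is strictly less than $\pi/2$.

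To prove this sub-claim I would take a Gaussian-reduced basis $e,f$ with $|e|\leq |f|$ and, after relabelling, $\langle e,f\rangle>0$, so $g=e-f$. After rotating so that $e$ lies along the positive $x$-axis, the reduction inequality $\langle e,f\rangle\leq |e|^2/2$ gives $\cos\theta_{ef}\leq |e|/(2|f|)\leq 1/2$, whence $\theta_f=\theta_{ef}\in [\pi/3,\pi/2)$. Writing $g=(|e|-|f|\cos\theta_f,\,-|f|\sin\theta_f)$, the same inequality yields $g_x\geq |e|/2>0$ while $g_y<0$, so the pair-direction $\theta_g$ lies in $(\pi/2,\pi)$. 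Two of the three arcs, $\theta_{ef}$ and $\pi-\theta_g$, are then immediately $<\pi/2$; the middle arc $\theta_g-\theta_{ef}$ being $<\pi/2$ reduces, after a short trigonometric calculation (using $\tan\theta+\cot\theta=1/(\sin\theta\cos\theta)$), to the inequality $|f|>|e|\cos\theta_f$, which is immediate from $|f|\geq |e|$ and $\cos\theta_f\leq 1/2<1$. Once the sub-claim is established, the lemma follows: an open arc of length $\leq \pi/2$ containing all three pair-directions would leave a complementary arc of length $\geq \pi/2$, coinciding with one of the three inter-arcs and contradicting the sub-claim. The middle-arc bound in the hexagonal case is the only substantial obstacle.
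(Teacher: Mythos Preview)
Your argument is correct and follows a genuinely different route from the paper's. In the hexagonal case the paper singles out a minimal vector $e$ and uses the inequalities $|\langle e,f\rangle|<\langle e,e\rangle$ and $|\langle e,g\rangle|<\langle e,e\rangle$ to conclude that the lines $\mathbb{R}f$ and $\mathbb{R}g$ both make angles strictly greater than $\pi/4$ with $\mathbb{R}e$; hence $f$ and $g$ lie in windmill cones distinct from $\mathcal{C}_e$, and a short case analysis (either one of them matches the colour of $\mathcal{C}_e$, or else $f$ and $g$---being separated by $\mathbb{R}e$ or by $(\mathbb{R}e)^\perp$---occupy the two cones of the opposite colour) finishes the proof. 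Your approach is instead symmetric in the three Voronoi vectors: you establish the rotation-invariant fact that each of the three gaps between consecutive Voronoi pair-directions is $<\pi/2$, which immediately rules out their confinement to any of the arcs $I_j\cup I_{j+1}$. This buys a clean general statement about Voronoi directions, at the price of an explicit trigonometric computation where the paper gets by with a qualitative separation argument. Incidentally, your middle-arc bound $\theta_g-\theta_f<\pi/2$ can be obtained without computing tangents: since $-g=f-e$ one has $\langle f,-g\rangle=|f|^2-\langle e,f\rangle\ge |f|^2-\tfrac12|e|^2>0$, so the angle between the directions of $f$ and $-g$ is acute.
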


\begin{proof}[Proof of Lemma \ref{lemgenpos}] We suppose first that 
the Voronoi domain of $\Lambda$ is a rectangle. This implies
$\mathcal V=\{\pm e,\pm f\}$ with $e$ and $f$ two elements of open windmill
cones in the upper halfplane forming a reduced orthogonal 
basis of $\Lambda$. Since $e$ and $f$ are orthogonal they belong to two
distinct open windmill cones of the same colour.
They form therefore a windmill basis.

We consider now $\mathcal V=\{\pm e,\pm f,\pm g\}$
with $e,f,g$ in open windmill cones of the
  upper half-plane. We assume moreover that $e$ is a (perhaps not unique)
  minimal vector of $\Lambda$. Minimality of $e$ and the inequalities 
$\vert\langle e,f\rangle\vert< \langle e,e\rangle$ and
$\vert \langle e,g\rangle\vert< \langle e,e\rangle$
imply that the line
$\mathbb R e$ crosses both lines $\mathbb R f$ and $\mathbb R g$
with angles strictly larger than $\pi/4$. The open
windmill cone $\mathcal C_e$ containing $e$ has an opening angle of $\pi/4$
and is therefore distinct
from the (not necessarily distinct) open windmill cones $\mathcal C_f$
and $\mathcal C_g$ containing $f$, respectively $g$. We get
a windmill
basis $e,h$ for $h$ in $\{f,g\}$ such that $\mathcal C_e$ and $\mathcal C_h$
are of the same colour. If such an element $h$ does not exist, then
$\mathcal C_f$ and $\mathcal C_h$ have the same colour opposite to the
colour of $\mathcal C_e$.
Since $f$ and $g$ are either separated
by the line $\mathbb Re$ or by its orthogonal $(\mathbb R e)^\perp$
(with $(\mathbb R e)^\perp\setminus\{(0,0)\}$
contained in the two open windmill cones orthogonal to $\mathcal C_e$ and
of the same colour as $\mathcal C_e$),
the elements $f$ and $g$ of the upper half-plane
belong to different open windmill cones of the same colour
and form therefore a windmill basis.
\end{proof}

\begin{proof}[Proof of Proposition \ref{propVoron}] The result holds
  by Lemma \ref{lemgenpos} if $\mathcal V$ contains no elements
  on boundary lines separating black and white windmill cones.

  Otherwise, if $\mathcal V=\{\pm e,\pm f\}$ is reduced to two
  pairs of
  orthogonal elements, then $e,f$ are both elements in the boundary of black
  and white windmill cones. The reflection $ae+bf\longmapsto ae-bf$ induces
  therefore a lattice isomorphism of $\Lambda$ which exchanges colours of
  windmill cones. Such a lattice has no windmill bases
  by Lemma \ref{lemonecolour}.

  We suppose now that $\Lambda$ has a windmill basis $u,v$ not contained
  in the set $\mathcal V=\{\pm e,\pm f,\pm g\}$ of Voronoi vectors,
  with $e,f,g$ in the closed upper half-plane.
  A sufficiently small rotation $\rho$ (suitably chosen if
$\{e,f,g\}$ intersects the horizontal line $y=0$) 
  sends the windmill basis $u,v$ of $\Lambda$
to a windmill basis (of the same colour) $\rho(u),\rho(v)$ of $\rho(\Lambda)$
and sends $\mathcal V$ to a set $\rho(\mathcal V)$ of Voronoi
vectors having no elements on boundary
lines separating windmill cones. Lemma \ref{lemgenpos} shows
that $\rho(\Lambda)$ has an additional windmill basis contained in $\rho(\mathcal V)$ distinct from the windmill basis $\rho(u),\rho(v)$ not contained
in $\rho(\mathcal V)$.
Proposition \ref{propcommonelt} implies therefore
that $\rho(\Lambda)$
has a unique pair $\pm \rho(m)$ of minimal vectors
intersecting every windmill basis of
$\rho(\Lambda)$. We can therefore assume (perhaps after a permutation
among the elements $e,f,g$) that $e$ is the unique minimal element in
the upper half-plane of $\Lambda$
and that $e$ is contained in every windmill basis of
$\Lambda$.
Since $\pm f$ and $\pm g$ are the elements of $\pm f+\mathbb Ze$ which are
closest to the line $(\mathbb R e)^\perp$ orthogonal to $\mathbb R e$, the
last assertion of Proposition \ref{propcommonelt}
implies that either $e,f$ or $e,g$ is a windmill basis of $\Lambda$.
\end{proof}

{\bf Acknowledgements:} I thank M. Decauwert, P. Dehornoy, 
P. De la Harpe, C. Elsholtz, A. Guilloux, 
C. Leuridan, C. MacLean, E. Peyre, L. Spice
for comments, remarks or questions. Special thanks to C. Leuridan
and to two anonymous referees
whose  suggestions improved the text hugely.


\begin{thebibliography}{99}


\bibitem{C} de Cervantes, M. (1605), 
  \textit{El ingenioso hidalgo don Quijote de la Mancha}, Madrid.
    
\bibitem{Chr} Christopher, D.A. (2016),
A partition-theoretic proof of Fermat's two squares theorem.
\textit{Discrete Math.} 339(4), 1410--1411. 

\bibitem{CS} Conway, J.H.; Sloane, N.J.A. (1999),
Sphere packings, lattices and groups. 3rd ed. Springer, 703 p. 

\bibitem{Do} Dolan, S. (2016), A very simple proof of the two-squares theorem, The Mathematical Gazette, Volume 105, Issue 564, 511. 

\bibitem{Els} Elsholtz C. (2010), {\it 
A combinatorial approach to sums of two squares and related problems.} Additive number theory. Festschrift in honor of the sixtieth birthday of Melvyn B. Nathanson (edited by 
D. Chudnovsky, David et al.), Springer, 115--140 (2010). 

\bibitem{Gra} Grace, J.H. (1927), The four square theorem,
\textit{J. London Math. Soc.} 2, 3--8.

\bibitem{HW} Hardy, G.H.; Wright, E.M. (1960), 
\textit{An introduction to the theory of numbers.} Fourth ed. 
Oxford University Press, 421 p. 

\bibitem{HB} D.R. Heath-Brown (copyright 1984), Fermat's Two Squares Theorem,
  archived by Oxford University: https://ora.ox.ac.uk/.

\bibitem{Mo} Mathoverflow, MO31113:   https://mathoverflow.net/questions/31113.
  
\bibitem{MO405035} Mathoverflow, MO405035: https://mathoverflow.net/questions/405035.

\bibitem{Sp} Spivak, A. : \foreignlanguage{russian}{Крылатые квадраты} (Winged squares), Lecture notes for the mathematical circle at Moscow State University, 15th lecture 2007.
  https://mmmf.msu.ru/lect/spivak/summa\_sq.pdf
  
\bibitem{WikipFerm} Wikipedia (June 2022), Fermat's theorem on sums of two squares, 
  https://en.wikipedia.org/wiki/Fermat's\_theorem\_on\_sums\_of\_two\_squares.
  
\bibitem{Za} Zagier, D. (1990),
A one-sentence proof that every prime $p\equiv 1\pmod 4$
is a sum of two squares.
\textit{Amer. Math. Monthly} 97(2), 144.

\end{thebibliography}
\end{document}